\documentclass{siamart0216}

\usepackage{amsfonts}
\usepackage{amssymb}
\usepackage{algorithm}
\usepackage{algorithmic}
\usepackage{eqparbox}

\usepackage{etoolbox}  % patch def of algorithmic environment
\makeatletter
\patchcmd{\algorithmic}{\addtolength{\ALC@tlm}{\leftmargin} }{\addtolength{\ALC@tlm}{\leftmargin}}{}{}
\makeatother
\usepackage{enumerate}
\usepackage{subfigure}
\usepackage{courier}
\usepackage{setspace}
\usepackage{multirow}
\usepackage{booktabs}
\usepackage[amsmath,thmmarks]{ntheorem}
\usepackage{soul}
\usepackage{booktabs}
\usepackage{threeparttable}

\newtheorem{Mytheorem}{theorem}
\newtheorem{MyProp}{proposition}
%\newtheorem{Conj}{Conjecture}

%\captionsetup[subfigure]{farskip = 0pt}
%\setlength{\textfloatsep}{5pt plus 0.3pt minus 0.1pt}
\setlength{\intextsep}{5pt plus 0.3pt minus 0.1pt}
\setlength{\tabcolsep}{2pt}

\newcommand{\bQ}{{\boldsymbol{Q}}}

\newcommand{\bI}{{\boldsymbol{I}}}

\newcommand{\bR}{{\boldsymbol{R}}}
\newcommand{\bB}{{\boldsymbol{B}}}
\newcommand{\bA}{{\boldsymbol{A}}}

\newcommand{\bG}{{\boldsymbol{G}}}
\newcommand{\bH}{{\boldsymbol{H}}}
\newcommand{\bY}{{\boldsymbol{Y}}}

\newcommand{\nT}{{T}}
\newcommand{\nF}{{\textnormal{F}}}
% New macros
\newcommand{\randQBb}{\texttt{randQB\_b}}
\newcommand{\qrcp}{{\texttt{trQRCP}}}
\newcommand{\randQB}{\texttt{randQB}}
\newcommand{\randQBfp}{\texttt{randQB\_FP}}
\newcommand{\randQBei}{\texttt{randQB\_EI}}

% uncomment the following line to remove striked contents from pdf.
\renewcommand{\st}[1]{}

%pass-efficient, 

%\title{Efficient randomized algorithms for adaptive low-rank factorizations of large matrices
%\title{Improving randomized algorithms for low-rank factorization of large matrices
\title{Efficient Randomized algorithms for the fixed-precision low-rank matrix approximation
%\thanks{Submitted to the journal's Methods and Algorithms for Scientific Computing section Feb. 10, 2017.}
}
%\title{Improved randomized algorithms for computing low-rank factorizations of large matrices\thanks{Submitted to the journal's Methods and Algorithms for Scientific Computing section May 31, 2016.}}
\author{Wenjian Yu\thanks{TNList, Department of Computer Science and Technology, Tsinghua University, Beijing 100084, China ({\tt yu-wj@tsinghua.edu.cn}).}
    \and Yu Gu\thanks{Department of Computer Science and Technology and Institute for Interdisciplinary Information Sciences, Tsinghua University, Beijing 100084, China ({\tt guyu13@mails.tsinghua.edu.cn}).}
\and Yaohang Li\thanks{Department of Computer Science, Old Dominion University, Norfolk, VA 23529, USA ({\tt yaohang@cs.odu.edu}).} 
}

\begin{document}

\maketitle

\begin{abstract}
Randomized algorithms for low-rank matrix approximation are investigated, with the emphasis on the fixed-precision problem and computational efficiency for handling large matrices. The algorithms are based on the so-called QB factorization, where $\bQ$ is an orthonormal matrix. 
Firstly, a mechanism for calculating the approximation error in \emph{Frobenius norm} is proposed, which enables efficient adaptive rank determination for large and/or sparse matrix. It can be combined with any QB-form factorization algorithm in which $\bB$'s rows are incrementally generated. Based on the blocked randQB algorithm by P.-G. Martinsson and S. Voronin, this results in an algorithm called \texttt{randQB\_EI}.
Then, we further revise the algorithm to obtain a pass-efficient algorithm, \texttt{randQB\_FP}, which is mathematically equivalent to the existing randQB algorithms and also suitable for the fixed-precision problem. Especially, \texttt{randQB\_FP} can serve as a single-pass algorithm for calculating leading singular values, under certain condition. With large and/or sparse test matrices, we have empirically validated the merits of the proposed techniques, which exhibit remarkable speedup and memory saving over the blocked randQB algorithm. We have also demonstrated that the  single-pass algorithm derived by {\randQBfp} is much more accurate than an existing single-pass algorithm. And with data from a scenic image and an information retrieval application, we have shown the advantages of the proposed algorithms over the adaptive range finder algorithm for solving the fixed-precision problem.  

\end{abstract}

\begin{keywords} 
adaptive rank determination, randomized algorithm, low-rank matrix approximation, pass-efficient algorithm, fixed-precision problem.
\end{keywords}

\begin{AMS}
15A18, 65F30, 65F15, 68W20, 60B20
\end{AMS}

\section{Introduction}

Low-rank matrix factorizations, like the partial singular value decomposition (SVD) and the rank-revealing QR factorization, play a crucial role in data analysis and scientific computing. In recent years, techniques based on randomization have been investigated for performing the computation and low-rank factorization of large matrices \cite{Martin2015, Halko2011, cacm16, Voronin2015,
%Martin2006, Martin2011, 
EB2011, Drineas2006, Rokhlin2009, YaohangLi, sc15, Mahoney11}.  They involve the same or fewer floating-point operations ($flops$) than classical algorithms, and are more efficient by exploiting modern computing architectures. 

A basic idea of the randomized techniques is using random projection to approximate the dominant subspace  of a matrix. For an $m\times n$ matrix $\boldsymbol{A}$, suppose the orthogonal basis vectors  of this approximate subspace form an $m\times k$ orthonormal matrix $\boldsymbol{Q}$. Then,
%$k$-dimensional dominant subspace has a set of orthogonal basis vectors forming a matrix $\boldsymbol{Q}$ (here $k<\min (m,n)$). Because the range space of $\bA$ is well approximated by the span of $\bQ$'s columns, 
we have \cite{Martin2015, Halko2011}:
\begin{equation}\label{eq:qb}
\boldsymbol{A} \approx \boldsymbol{QB},
\end{equation}
where $\boldsymbol{B}$ is a $k\times n$ matrix, and
\begin{equation}\label{eq:Bformula1}
	\boldsymbol{B} = \boldsymbol{Q}^\nT \boldsymbol{A}.
\end{equation}
Standard factorizations, e.g., SVD, can be further performed on the smaller matrix $\boldsymbol{B}$, to obtain the low-rank factorizations of  $\boldsymbol{A}$. 

The approximation presented by (\ref{eq:qb}) and (\ref{eq:Bformula1}) can also be regarded as a kind of low-rank factorization of $\bA$, called \emph{QB factorization} or \emph{QB approximation} in this work. In [2], a basic randomized scheme for computing the QB approximation was presented, as shown in Figure 1(a). For producing close to optimal rank-$k$ approximation, the over-sampling scheme using a random Gaussian matrix $\boldsymbol{\Omega}$ with $k+s$ columns is  employed, where $s$ is a small integer.
% ``orth($\boldsymbol{X}$)'' in Figure 1(a) denotes orthonormalization of the columns of $\boldsymbol{X}$. In practice, it is realized efficiently by a call to a packaged QR factorization. 
We use {\randQB} to denote this algorithm. 
%More other randomized schemes for computing low-rank matrix factorization and least-squares approximation can be found in \cite{cacm16,Mahoney11}.

\begin{figure*}[]
\centering
\subfigure[The \texttt{randQB} algorithm] {\includegraphics[height=1in]{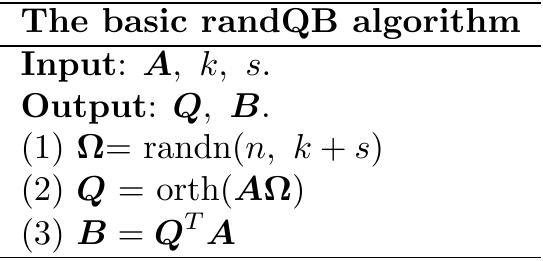}}
\subfigure[The \texttt{randQB\_b} algorithm] {\includegraphics[width=2.58in]{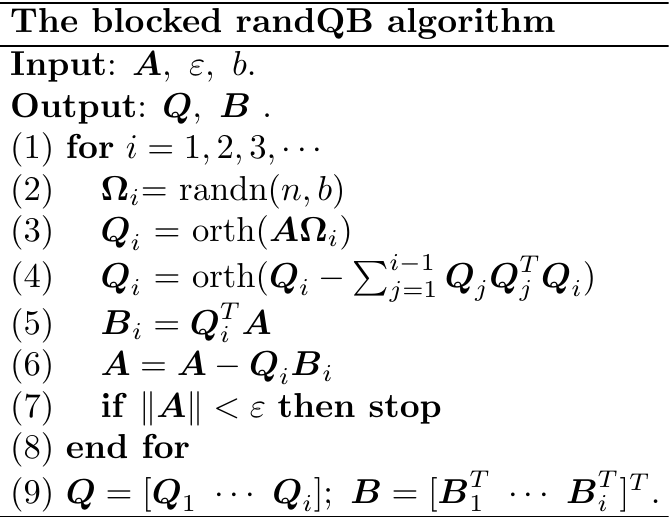}}
\caption{The randomized algorithms for the QB factorization.}
\label {fig1}
\end{figure*}

Usually, the problem of low-rank matrix approximation falls into two categories:
\begin{itemize}
\item The \emph{fixed-rank} problem, where the  rank parameter $k$  is given.
\item The \emph{fixed-precision} problem, where we seek $\bQ$ and $\boldsymbol{B}$ with as small as possible size such that $\| \boldsymbol{A} - \boldsymbol{QB} \| < \varepsilon$, where $\varepsilon$ is a given accuracy tolerance. 
%More practically, the relative accuracy tolerance is often considered, where $\varepsilon$ is a fraction of $\|\bA\|$, i.e., $\epsilon\|\bA\|$.
\end{itemize}
% By the Eckart-Young theorem \cite{E-Y1936}, the truncated SVD provides the optimal low-rank approximation. In contrast, techniques like \texttt{randQB} and partial pivoted QR do not produce the optimal approximation. However, computing SVD of a large matrix is costly, and in many applications the optimality is not necessary. It is acceptable to trade some optimality in accuracy for gains in computational efficiency.

A blocked variant of the {\randQB} algorithm proposed in \cite{Martin2015}, i.e. the {\randQBb} algorithm in Figure 1(b), is suitable for the fixed-precision problem. It incrementally builds the factors $\bQ$ and $\bB$ based on the combination of the {\randQB} algorithm and the \emph{blocked} Gram-Schmidt scheme, and  measures the approximation error by explicitly maintaining the residual matrix. However, it is inefficient or even fails for handling large matrix, because maintaining the residual matrix is costly in runtime and memory usage.

%an adaptive randomized range finder was proposed in \cite{Halko2011}. It employs the incremental sampling approach with a probabilistic error estimator to determine the size of $\boldsymbol{Q}$ and $\boldsymbol{B}$. However, the error estimator often overestimates the approximation error, yielding much larger output matrices than what is necessary. In , Martinsson and Voronin proposed a randomized blocked algorithm (i.e., \randQBb) for computing rank-revealing factorizations. It incrementally builds the factors $\bQ$ and $\bB$ based on the combination of the basic {\randQB} procedure and the \emph{blocked} Gram-Schmidt scheme. It measures the approximation error by explicitly maintaining the residual matrix, instead of using the probabilistic error estimator. This makes it suitable for the fixed-precision problem. However, it is inefficient or even fails for handling large matrix, because maintaining the residual matrix is costly in runtime and memory usage. 

In this work, the randomized algorithms for the fixed-precision problem are investigated considering their adaptability to large and/or sparse matrices. 
Firstly, a mechanism is proposed for calculating the error of QB approximation in \emph{Frobenius norm} during the iterative process of building $\boldsymbol{Q}$ and $\boldsymbol{B}$. It does not require maintaining the residual matrix (or updating matrix $\bA$), and thus avoids fill-in while handling a sparse $\bA$.
This mechanism is also applicable to other iterative computing procedures, e.g., the {\qrcp} algorithm \cite{Duersch2015}. Secondly, the algorithm is further revised  to largely reduce the number of passes over matrix $\bA$, in order to adapt the scenarios where the cost of accessing matrix $\bA$ is expensive.
These techniques result in two algorithms called \texttt{randQB\_EI} and \texttt{randQB\_FP}, which inherit the merits of {\randQB}/{\randQBb} algorithms and have extra benefits.
Numerical experiments are carried out on a multi-core computer to validate the efficiency and accuracy of the proposed algorithms for handling large or sparse matrices in practical scenarios. The results show that our \texttt{randQB\_EI} and \texttt{randQB\_FP} algorithms have up to 3X speedup and 3X memory saving over an implementation of the {\randQBb} algorithm for dense matrices. They also exhibit up to 22X speedup over the implementation of {\randQBb} algorithm for  sparse matrices.
Compared with the single-pass algorithm and the adaptive randomized range finder  in \cite{Halko2011}, the proposed algorithms exhibit much better accuracy or avoid the  large overestimation of the sizes of $\bQ$ and $\bB$. For reproducibility, we have shared the Matlab codes of
the proposed algorithms and experimental data on \url{https://github.com/WenjianYu/randQB_auto}.

\section{Technical preliminaries}
This section summarizes the background we need for presenting the proposed techniques.
Throughout the paper, we measure vectors with their Euclidean norm. Two kinds of matrix norm are usually considered: Frobenius norm and spectral norm ($l_2$-norm). The spectral norm of a matrix is relatively difficult to calculate, though it is often more informative for noisy data \cite{Tygert2017}. The Frobenius norm $\|\boldsymbol{A}\|_{\nF}=(\sum_{i,j}|\boldsymbol{A}(i,j)|^2)^{1/2}$ is easier for calculation, and thus more widely used in data analysis and machine learning applications \cite{Mahoney11}. We measure matrices with their Frobenius norm by default. We also assume that all matrices are real valued, although the generalization to complex matrices is of no difficulty. 

\subsection{Randomized algorithms}
% The approach based on the partial QR factorization can be employed to produce a close to optimal low-rank approximation, but the randomized technique recently developed can provide a more efficient solution. It produces good rank-$k$ factorizations costing $O(mnk)$ or fewer flops, and also has good scalability on multi-core computing platforms. 

To produce a rank-$k$ factorization for an $m \times n$ matrix $\boldsymbol{A}$, 
with the basic randQB algorithm in Figure 1(a) one obtains an $m\times l$  orthonormal matrix $\boldsymbol{Q}$ and an $l\times n$ matrix $\boldsymbol{B}$, where $l>k$ due to over-sampling. %Then, the $l\times n$ matrix $\boldsymbol{B}$ is computed from $\boldsymbol{B} = \boldsymbol{Q}^T \boldsymbol{A}$, to minimize the approximation error.
With this QB approximation, the standard factorizations can be efficiently computed. For example, the standard SVD algorithm can be performed on $\boldsymbol{B}$, which results in $\boldsymbol{B}= \tilde{\boldsymbol{U}}\tilde{\boldsymbol{\Sigma}}\tilde{\boldsymbol{V}}^T$. Then,
\begin{equation}\label{eq:approxSVD}
\boldsymbol{A} \approx \boldsymbol{QB} = \boldsymbol{Q}\tilde{\boldsymbol{U}}\tilde{\boldsymbol{\Sigma}}\tilde{\boldsymbol{V}}^T.
\end{equation}
The first $k$ columns of  matrices $\boldsymbol{Q}\tilde{\boldsymbol{U}}$ and $\tilde{\boldsymbol{V}}$ and the 
$k\times k$ upper-left submatrix of $\tilde{\boldsymbol{\Sigma}}$ approximate the rank-$k$ SVD factors of $\bA$. Similarly, by changing the factorizations made on $\boldsymbol{B}$, one obtains the approximate QR factorization and CUR factorization, etc \cite{Martin2015, Voronin2015}. Notice that the accurate truncated SVD provides the optimal low-rank approximation \cite{E-Y1936}. However, computing accurate SVD of a large matrix is costly, and in many applications the optimality is not necessary. It is thus acceptable to compute the approximate low-rank factorizations for gains in computational efficiency.

% The key step for finding the matrix $\boldsymbol{Q}$  is multiplying $\boldsymbol{A}$ with a random matrix, such as $\boldsymbol{\Omega}$ in the procedure shown in Figure 1. $\boldsymbol{\Omega}$ is usually chosen as a standard Gaussian matrix; i.e., its entries are independent standard normal variables of zero mean and standard deviation 1. Although other random matrices might work equally well, the choice of the Gaussian matrix provides some theoretical and practical advantages \cite{Halko2011, Gu2015}. 

%The output of the randomized approximation algorithms is a random variable, as it depends on the drawing of a Gaussian matrix. It has been proven that the variation in this random variable is small, which means the output is always very close to the variable's expectation. For more details, please refer to [1, 2]. The bound of the expectation of the approximation error has been derived in [2]. And, the distribution of errors caused by randomness has been demonstrated in [1].
The approximation error of the randomized algorithm is a random variable. The authors of [2] have studied the properties of the error in term of spectral and Frobenius norms, and given the bounds on their expectation and variance.

%\footnote{Here, we consider the sparse matrix whose nonzero entries are explicitly represented, which widely exists in information retrieval problems or for representing network or graph associated data.} %The basic randomized scheme may be further accelerated by utilizing a \emph{structured} random matrix to substitute for $\boldsymbol{\Omega}$, or by the sparsity-preserving random projection technique while handling a sparse $\bA$ \cite{Halko2011, Mahoney11, sparseJL}. In this work we only consider the basic algorithm shown in Figure 1(a), referred to as the \texttt{randQB} algorithm, which is most general and robust.  

%The {\randQB} algorithm can be further accelerated by utilizing a \emph{structured} random matrix, instead of the Gaussian matrix \cite{Halko2011}. This reduces the computational cost for a general dense $m\times n$ matrix from $O(mnl)$ to $O(mn\log(l))$ flops. However, this kind of technique usually % does not adapt to the fixed-precision problem and 
%cannot achieve $O(mn\log(l))$ complexity while combined with the power scheme, for handling matrices with slow decay of singular values. 

The truncated QR factorization with column pivoting can also be used for low-rank matrix approximation \cite{MatrixBook, Gu1996}. Classical pivoted QR factorization has the disadvantage that it is hard to be parallelized or to take usage of BLAS-3 operation. Recently, the pivoted QR factorization was largely accelerated through utilizing a randomized technique, which achieves the efficiency comparable to the unpivoted QR decomposition \cite{Martin2015b, Duersch2015}. This makes the truncated pivoted QR factorization competitive for low-rank approximation. Notice that QR factorization can be regarded as a special case of the QB factorization. Therefore, one of the techniques proposed here (c.f. Sect. 3) could benefit the solution of the fixed-precision problem based on QR factorizations, as well.

The major computation of the {\randQB} algorithm lies at the multiplication of $\bA$ and $\boldsymbol{\Omega}$. If $\bA$ is sparse or a structured matrix (often \emph{implicitly} defined) for which matrix-vector products can be rapidly evaluated, the cost of multiplication can be largely reduced (even to $O(m+n)$ flops). The implicitly-defined structured matrix often arises from physical problems, such as a discretized integral operator applied via the fast multipole method, and is sometimes referred to as an \emph{implicit sparse matrix}. 
%In this sparse setting, an interesting topic of research is the technique of using sparse random projection or sparsity-preserving random projection \cite{sparseJL}, which is beyond the scope of this paper. 

For the fixed-precision problem, an adaptive randomized range finder was proposed in \cite{Halko2011}. It employs the incremental sampling approach with a probabilistic error estimator to determine the size of $\boldsymbol{Q}$ and $\boldsymbol{B}$. 
It is based on the statement that
\begin{equation}\label{eq: stochastic_error_est}
\|\bA-\bQ\bQ^T\bA \|_2 \le 10 \sqrt{\frac{2}{\pi}} \max_{i=1, \cdots, r} \|(\bA-\bQ\bQ^T\bA)\boldsymbol{\omega}^{(i)} \| ,
\end{equation}
with probability at least $1-10^{-r}$ [2]. Here $\|\cdot\|_2$ stands for the spectral norm, $\boldsymbol{\omega}^{(i)}$ is a random vector, and $r$ is a small integer, e.g., $r=10$. However, the error estimator often overestimates the approximation error, yielding much larger output matrices than what is necessary.

The {\randQBb} algorithm in \cite{Martin2015} is based on the single-vector version of {\randQB} algorithm with Gram-Schmidt procedure, which allows to construct the QB factorization and to evaluate its error step by step.     
%It is shown in Figure 2(a), which constitutes the ``single-vector randQB'' algorithm for computing the QB approximation.
%If lines (4) and (5) are replaced such that we pick $\boldsymbol{q}_i$ as the largest column of $\bA^{(i)}$, the algorithm becomes the column pivoted Gram-Schmidt algorithm for partial QR factorization. 
%It is straight-forward to show that if the algorithm is executed in exact arithmetic, $\boldsymbol{Q}_i$ is orthonormal, $%\boldsymbol{B}_i=\boldsymbol{Q}_i^T \boldsymbol{A}$, and
%\begin{equation}\label{eq:Ai}
%\boldsymbol{A}^{(i)}= \boldsymbol{A}- \boldsymbol{Q}_i\boldsymbol{B}_i ~.
%\end{equation}
%Therefore, the error of QB approximation is updated, allowing a precise stopping criterion for the auto-rank problem. 
%If we choose $\boldsymbol{q}_i$ in line (4) through drawing a random vector and multiplying $\boldsymbol{A}$ with it, we obtain the "single-vector randQB" algorithm for computing the QB approximation. 
%The algorithm is mathematically equivalent to the basic randQB procedure shown in Figure 1, except for the stopping criterion used. 
In order to exploit \emph{blocking} to attain high performance of linear algebraic computation, the algorithm is then converted to the blocked randQB algorithm in Figure 1(b), where step (6) is for calculating the residual matrix. Therefore, the {\randQBb} algorithm allows a precise error calculation for the fixed-precision problem. Notice that step (4) there is a re-orthogonalization operation which eases the accumulation of round-off error under floating point arithmetic. 
%The experiments showed that the {\randQBb} has simular runtime and accuracy as the basic \randQB
% than the column-pivoted QR factorization, and runs much faster on multi-core architectures.

The blocked randQB algorithm is able to produce $\bQ$ and $\boldsymbol{B}$ in smaller sizes than the adaptive randomized range finder. However, explicitly maintaining the residual matrix brings extra time and memory cost if a large matrix $\bA$ is handled. This disadvantage becomes more serious if $\bA$ is also sparse, because the fill-in phenomena leads to a dense residual matrix.

In order to reveal the difference among the relevant randomized algorithms, we give a brief comparison of them for the fixed-rank problem, presented as Table 1. 
\texttt{svds} denotes the Matlab built-in command for truncated SVD \cite{Lehoucq1998}, which is based on a Krylov subspace iterative method. 
{\qrcp} denotes the randomized pivoted QR factorization \cite{Duersch2015}.  {\randQBfp} is one of the contributions in this paper, which is a pass-efficient algorithm (c.f. Sect. 4). 
To depict the performance of the algorithms in the situation where the cost of accessing matrix entries is expensive (e.g. $\bA$ are stored in slow memory) \cite{Halko2011}, we include the number of passes over matrix $\bA$ in Table 1.
From the table, we see that the  {\randQBfp} algorithm inherits the merits of the  {\randQB} algorithm, and is also suitable for the fixed-precision problem.  
%It also tends to be a kind of \emph{pass-efficient} algorithm, which requires only a constant number of passes over the data, as opposed to $O(k)$ passes for classical algorithms. This is very useful for the situations where the cost of accessing matrix entries is expensive. Particularly, a single-pass algorithm is desirable for data generated in a streaming fashion or data that are too large to be stored in fast memory \cite{Halko2011, ZhangZH2016}.
%
%For the fixed-rank problem of QB factorization, the basic {\randQB} procedure and its blocked variant {\randQBb} \cite{Martin2015} given in Figure 1(b) produce similar results. The pivoted QR decomposition also serves the purpose. Especially, the recent truncated QRCP algorithm using random sampling (denoted by \qrcp) largely reduces its runtime \cite{Duersch2015}. A contribution of this paper is a pass-efficient algorithm called {\randQBfp}. The properties of these algorithms for the fixed-rank problem are summarized in Table 1. Actually, the $1+2P$ of {\randQBfp} can be further reduced because it is possible to apply less frequent orthogonalization during the application of power scheme \cite{Voronin2015}.
\begin{table}[h]
\begin{threeparttable}
\caption{Properties of the randomized algorithms for the fixed-rank matrix approximation ($P$ is a small integer for power scheme, and $b$ is the block size).}
\centering
\begin{tabular}{cccccc}
\hline
	& \texttt{svds} & \randQB & \randQBb & \qrcp & \randQBfp \\
\hline
Computational efficiency & low & high & high & high & high \\
% \hline
% Fixed rank approx. accuracy & optimal & medium & medium & good & medium \\
\hline
Adaptive rank determination & no & no & yes & yes\tnote{$\dagger$} & yes \\
%\hline
%Adaptability to sparse matrix & yes & yes & no & yes & yes \\
\hline 
Number of passes over matrix & $\alpha k$\tnote{*} &  ~ 2 or 2+2$P$ & $4k/b$ & $3k/b+1$ & 1 or 1+2$P$\tnote{$\ddagger$} \\
\hline
\end{tabular}
\begin{tablenotes}
\item[*] \footnotesize{$\alpha$ is a number larger than 1.}
\item[$\dagger$] This was not mentioned in \cite{Duersch2015}. For more detail, please see Remark 3.2.
\item[$\ddagger$] Suppose matrix $\bA$ is stored in the row-major format.
\end{tablenotes}
\end{threeparttable}
\end{table}

    \section{An efficient Frobenius-norm error indicator and its application} In this section, we first propose an error indicator for measuring the approximation error in Frobenius norm and an efficient framework for solving the fixed-precision problem of QB factorization. Then, the {\randQBei} algorithm is derived based on the blocked randQB algorithm. Finally, the accuracy and validity of the error indicator in floating point arithmetic is discussed.
    \subsection{An error indicator} We first give a theorem regarding Frobenius norm of the error of the QB approximation.
    \begin{Mytheorem}\label{error_estimation}
        Let $\bA$ be an $m\times n$ matrix. $\bQ$ denotes an $m\times k$  orthonormal matrix ($k<m$), and $\bB = \bQ^\nT\bA$. Then, 
        \begin{equation}\label{eq:err_indicator}
            \|\bA-\bQ\bB\|^2_\nF = \|\bA\|^2_\nF - \|\bB\|^2_\nF.
        \end{equation}
    \end{Mytheorem}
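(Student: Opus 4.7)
The plan is to recognize equation (\ref{eq:err_indicator}) as a Pythagorean-type identity arising from the fact that $\bQ\bB = \bQ\bQ^\nT\bA$ is the orthogonal projection of $\bA$ onto the column space of $\bQ$, so the residual $\bA-\bQ\bB$ is orthogonal (in the Frobenius/trace inner product) to $\bQ\bB$ itself. The cleanest route is to expand $\|\bA-\bQ\bB\|_\nF^2$ via the trace formula $\|\bX\|_\nF^2 = \operatorname{tr}(\bX^\nT\bX)$ and then simplify using the two given facts $\bQ^\nT\bQ = \bI_k$ and $\bB = \bQ^\nT\bA$.

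Concretely, I would first write
\begin{equation*}
\|\bA-\bQ\bB\|_\nF^2 = \operatorname{tr}\bigl((\bA-\bQ\bB)^\nT(\bA-\bQ\bB)\bigr) = \operatorname{tr}(\bA^\nT\bA) - \operatorname{tr}(\bA^\nT\bQ\bB) - \operatorname{tr}(\bB^\nT\bQ^\nT\bA) + \operatorname{tr}(\bB^\nT\bQ^\nT\bQ\bB).
\end{equation*}
Then I would substitute $\bQ^\nT\bA=\bB$ wherever it appears, together with $\bQ^\nT\bQ=\bI_k$. Each of the three cross/quadratic terms collapses to $\operatorname{tr}(\bB^\nT\bB)=\|\bB\|_\nF^2$, and the first term is $\|\bA\|_\nF^2$. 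Combining the signs $+1-1-1+1$ on these four pieces yields exactly $\|\bA\|_\nF^2 - \|\bB\|_\nF^2$, which is the claimed identity.

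There is no real obstacle here — everything reduces to trace manipulations and the orthonormality of $\bQ$. The only thing to be careful about is not to assume $\bQ\bQ^\nT=\bI_m$ (which would be false for $k<m$); one must only use $\bQ^\nT\bQ=\bI_k$ on the correct side. An even shorter presentation would observe that $\bQ\bB$ and $\bA-\bQ\bB$ are orthogonal since $(\bQ\bB)^\nT(\bA-\bQ\bB) = \bB^\nT\bQ^\nT\bA - \bB^\nT\bQ^\nT\bQ\bB = \bB^\nT\bB-\bB^\nT\bB = \bO$, so by Pythagoras in Frobenius norm, $\|\bA\|_\nF^2 = \|\bQ\bB\|_\nF^2 + \|\bA-\bQ\bB\|_\nF^2$, and finally $\|\bQ\bB\|_\nF^2 = \operatorname{tr}(\bB^\nT\bQ^\nT\bQ\bB)=\|\bB\|_\nF^2$. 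Rearranging gives (\ref{eq:err_indicator}). I would likely present this second, more geometric version since it makes transparent \emph{why} the identity holds and immediately suggests the algorithmic payoff: one can track $\|\bA-\bQ\bB\|_\nF^2$ incrementally by subtracting the squared Frobenius norms of newly generated rows of $\bB$ from a precomputed $\|\bA\|_\nF^2$, without ever forming or updating the residual matrix.
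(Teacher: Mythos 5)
Your proposal is correct and follows essentially the same route as the paper: expand $\|\bA-\bQ\bB\|^2_\nF$ via the trace identity $\|\boldsymbol{M}\|^2_\nF=tr(\boldsymbol{M}^\nT\boldsymbol{M})$ and simplify using $\bQ^\nT\bQ=\bI$ and $\bB=\bQ^\nT\bA$, which is exactly the paper's computation of $(\bA-\bQ\bB)^\nT(\bA-\bQ\bB)=\bA^\nT\bA-\bB^\nT\bB$. The Pythagorean reformulation you sketch is just a repackaging of the same cancellation, not a genuinely different argument.
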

    
    \begin{proof}
        Due to the property of Frobenius norm, we know for any matrix $\boldsymbol{M}$, 
\begin{equation}\label{eq:trace}
        \|\boldsymbol{M}\|^2_\nF = tr(\boldsymbol{M}^\nT \boldsymbol{M}),
\end{equation}
         where $tr(\cdot)$ calculates the trace of a matrix. Because $\bQ$ is orthonormal and $\bB = \bQ^\nT\bA$,
         \begin{equation}\label{eq:A_QB}
         \begin{aligned}
        (\bA - \bQ\bB)^\nT(\bA-\bQ\bB) &= (\bA-\bQ\bQ^\nT\bA)^\nT(\bA-\bQ\bQ^\nT\bA)\\
			&= \bA^\nT\bA - 2\bA^\nT\bQ\bQ^\nT\bA + \bA^\nT\bQ\bQ^\nT\bQ\bQ^\nT \bA \\
			&= \bA^\nT\bA  - \bA^\nT\bQ\bQ^\nT\bA\\
			&= \bA^\nT\bA - \bB^\nT\bB.
			\end{aligned}
			\end{equation}
        Now, applying the trace operation to both sides of (\ref{eq:A_QB}), and according to (\ref{eq:trace}), we obtain (\ref{eq:err_indicator}).
    \end{proof}
    
Theorem \ref{error_estimation} suggests that, if we have access to $\bB$ and $\|\bA\|_\nF$ is known \emph{a prior}, we can calculate the error of QB approximation without referring to the residual matrix. This leads to a framework for solving the fixed-precision problem, presented as Algorithm 1. It suits to any algorithm that incrementally generates the rows of $\bB$, including  {\randQBb}, {\qrcp} and the {\randQBfp} algorithm presented in Sect. 4.
        \begin{algorithm}
        \caption{A framework for solving the fixed-precision QB factorization problem}
        \label{algnew1}
        \begin{algorithmic}[1]
            \REQUIRE an $m\times n$ matrix $\bA$; desired accuracy tolerance $\varepsilon$.
            \ENSURE $\bQ$,~ $\bB$, ~such that $\|\bA- \bQ \bB\|_\nF< \varepsilon$.
            \STATE $\boldsymbol{Q} = [~]; ~ \boldsymbol{B} = [~];$  \COMMENT{empty matrices}
            \STATE $E = \|\boldsymbol{A}\|^2_\nF$  \COMMENT{initialization of the error indicator}
            \FOR{$i= 1, 2, 3, \cdots$}

            \STATE Generate $\bQ_i$ and $\bB_i$, s.t. $[\bQ, \bQ_i]$ is orthonormal and $\bB_i = \bQ_i^\nT\bA.$\label{refline}
            \STATE $\boldsymbol{Q} = [\boldsymbol{Q}, ~\boldsymbol{Q}_i]$
            \STATE $\boldsymbol{B} = \left[ \begin{array}{c}
                 \boldsymbol{B}\\
                 \boldsymbol{B}_i
            \end{array}\right]$
            \STATE $E = E - \|\boldsymbol{B}_i\|^2_\nF$    \COMMENT{update the error indicator}
            \STATE \textbf{if} $E < \varepsilon^2$ \textbf{then stop}
            \ENDFOR
            %\STATE $k= it$
        \end{algorithmic}
        \end{algorithm}
Below we prove the  correctness of Algorithm 1. %In all proofs thereafter regarding an iterative algorithm, we use $v^{(i)}$ to denote the value of any variable $v$ \emph{after} the $i$-th iteration of the loop is executed.
     
    % Now, we give the following proposition.
        \begin{Mytheorem} \label{pro:alg1}
            After the $i$-th iteration of loop in Algorithm 1 is executed, %$E^{(i)}$ satisfies
            \begin{equation}\label{thm2_eq_block}
                 E^{(i)}= \|\bA - \bQ^{(i)}\boldsymbol{B}^{(i)}\|^2_\nF,
            \end{equation}
            where $E^{(i)}$, $\bQ^{(i)}$ and $\bB^{(i)}$ denote the values of $E$, $\bQ$ and $\bB$ after the $i$-th iteration of the loop is executed, respectively.
        \end{Mytheorem}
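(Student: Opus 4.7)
The plan is to proceed by induction on $i$, leveraging Theorem \ref{error_estimation} at each step. The base case is essentially definitional: before the loop begins, $\bQ^{(0)}$ and $\bB^{(0)}$ are empty, so $\bQ^{(0)}\bB^{(0)}$ is the zero matrix and $\|\bA - \bQ^{(0)}\bB^{(0)}\|_\nF^2 = \|\bA\|_\nF^2 = E^{(0)}$ by line 2 of Algorithm 1. So the real content is the inductive step.

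For the inductive step, the key structural observation that I would establish first is that throughout the loop the invariant $\bB^{(i)} = (\bQ^{(i)})^\nT \bA$ holds, together with $\bQ^{(i)}$ being orthonormal. The orthonormality is immediate from step 4, which produces $\bQ_i$ so that $[\bQ^{(i-1)}, \bQ_i]$ is orthonormal. For the factorization identity, I would simply stack blocks: $(\bQ^{(i)})^\nT \bA = [\bQ^{(i-1)}, \bQ_i]^\nT \bA$, whose top block equals $\bB^{(i-1)}$ by the inductive hypothesis and whose bottom block equals $\bB_i = \bQ_i^\nT \bA$ by step 4, matching the definition of $\bB^{(i)}$ in step 6.

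Once the invariant is in hand, I would apply Theorem \ref{error_estimation} to $\bQ^{(i)}$ and $\bB^{(i)}$ to obtain
\begin{equation*}
\|\bA - \bQ^{(i)} \bB^{(i)}\|_\nF^2 = \|\bA\|_\nF^2 - \|\bB^{(i)}\|_\nF^2.
\end{equation*}
Because rows of $\bB^{(i)}$ decompose into the rows of $\bB^{(i-1)}$ stacked on $\bB_i$, the Frobenius norm splits additively: $\|\bB^{(i)}\|_\nF^2 = \|\bB^{(i-1)}\|_\nF^2 + \|\bB_i\|_\nF^2$. Combining these two identities with the inductive hypothesis $E^{(i-1)} = \|\bA\|_\nF^2 - \|\bB^{(i-1)}\|_\nF^2$ and the update rule $E^{(i)} = E^{(i-1)} - \|\bB_i\|_\nF^2$ from step 7 closes the induction.

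There is no genuine technical obstacle; the only care needed is bookkeeping the block indexing so that $\bB^{(i)}$ is always aligned with $\bQ^{(i)}$ and the telescoping of $\|\bB^{(i)}\|_\nF^2$ is explicit. The handling of the empty-matrix base case is worth stating carefully to avoid ambiguity, but beyond that the proof is a direct application of Theorem \ref{error_estimation} together with the additivity of the squared Frobenius norm under row stacking.
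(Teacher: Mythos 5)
Your proof is correct and follows essentially the same route as the paper's: both apply Theorem \ref{error_estimation} to $\bQ^{(i)}$ and $\bB^{(i)}$ after observing that $\|\bB^{(i)}\|^2_\nF = \sum_{j=1}^{i}\|\bB_j\|^2_\nF$ matches the accumulated updates to $E$. The paper simply telescopes the sum directly where you phrase it as a formal induction (and you are somewhat more explicit about the invariant $\bB^{(i)} = (\bQ^{(i)})^\nT\bA$), but this is a presentational difference, not a different argument.
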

        \begin{proof} Based on step 7 of Algorithm  1 and the property of Frobenius norm,
\begin{equation}\label{eq:Eformula}
                E^{(i)} = \|\bA\|^2_\nF - \sum_{j=1}^{i} \|\bB_j\|^2_\nF = \|\bA\|^2_\nF - \|\bB^{(i)}\|^2_\nF.
\end{equation}
Because $\bQ^{(i)}$ is an orthonormal matrix and $\bB^{(i)}=\bQ^{(i)\nT}\bA$,
(\ref{thm2_eq_block}) can be obtained by applying Theorem 1.
        \end{proof}

From Theorem \ref{pro:alg1}, we see that $E$ in Algorithm 1 equals to the square of the approximation error. It is an error indicator updated through calculating the Frobenius norm of $\boldsymbol{B}_i$. This yields two benefits: we no longer need to maintain the residual $\bA - \bQ\bB$, and the approximation error can be calculated with very small cost.
%computational cost of making error estimation becomes negligible.

Steps 7 and 8 in Algorithm 1 can be replaced by a row-by-row calculation scheme. 

\begin{tabular}{l}
\toprule
7a: \textbf{for} $j= 1, 2, \cdots m_i$ \textbf{do} ~ ~ ~ \# $m_i$ denotes the number of rows of $\bB_i$ \\
7b: \quad $E = E - \|\boldsymbol{B}_i(j, :)\|^2_\nF$ \\
8a: \quad \textbf{if} $E < \varepsilon^2$ \textbf{then} \\
8b: \quad \quad remove $\bB_i(j+1:m_i, :)$ from $\bB$; remove $\bQ_i(:, j+1:m_i)$ from $\bQ$  \\
8c: \quad \quad \textbf{stop} \\
8d: \quad \textbf{end if} \\
8e: \textbf{end for} \\
\bottomrule
\end{tabular}

\noindent{This adds negligible cost, but allows us to determine the certain row of $\boldsymbol{B}_i$ where the accuracy tolerance is just attained. It makes the column (row) number of outputted $\bQ$ ($\bB$) an arbitrary integer, instead of a multiple of block size $b$ in {\randQBb} algorithm.}

\textbf{\textit{Remark} 3.1}. If $\bA$ is an implicit sparse matrix, the proposed framework needs more effort for calculating its Frobenius norm. The columns of $\bA$ can be solved by multiplying $\bA$ with the canonical basis vectors. Therefore, the cost for calculating $\| \bA \|_\nF$ will be $O(n(m+n))$ flops, if each matrix-vector product costs $O(m+n)$ flops. This might be affordable, as it is executed just once.

\subsection{The {\randQBei} algorithm}
The combination of Algorithm 1 and the {\randQBb} algorithm results in Algorithm 2 (called {\randQBei}), whose steps 4$\sim$7 replace step 4 in Algorithm 1. Notice that step 7 in Algorithm 2 looks the same as step (5) in {\randQBb} algorithm, but is actually different. And, step (3) in  {\randQBb} algorithm becomes step 5 in Algorithm 2, which is the blocked Gram-Schmidt orthogonalization of $\bA \boldsymbol{\Omega}_i$ (notice $\bB= \bQ^T \bA$).
 %This treatment inherits the original spirit of the randQB procedure.
        \begin{algorithm}
        \caption{The {\randQBei} algorithm for the fixed-precision problem }
        \label{alg:randQBei}
        \begin{algorithmic}[1]
            \REQUIRE an $m\times n$ matrix $\bA$; desired accuracy tolerance $\varepsilon$; block size $b$.
            \ENSURE $\bQ$,~ $\bB$, such that $\|\boldsymbol{A}- \boldsymbol{QB}\|_\nF< \varepsilon$.
            \STATE $\boldsymbol{Q} = [~]; ~ \boldsymbol{B} = [~];$ 
            \STATE $E = \|\boldsymbol{A}\|^2_\nF$
            \FOR{$i= 1, 2, 3, \cdots$}
            \STATE $\boldsymbol{\Omega}_i$ = randn($n, b$) 
            \STATE $\boldsymbol{Q}_i$ = orth($\bA\boldsymbol{\Omega}_i-\bQ(\boldsymbol{B}\boldsymbol{\Omega}_i$))  
            \STATE $\bQ_i$ = orth($\bQ_i- \bQ(\bQ^\nT\bQ_i)$)  \quad \quad \quad  \# re-orthogonalization
            \STATE $\boldsymbol{B}_i=\bQ_i^\nT\bA$  \quad \quad \quad \quad \quad \quad \quad \quad \quad \quad \# no need to calculate $\bQ_i^\nT(\bA-\bQ\bB)$
            \STATE $\boldsymbol{Q} = [\boldsymbol{Q}, ~\boldsymbol{Q}_i]$
            \STATE $\boldsymbol{B} = \left[ \begin{array}{c}
                 \boldsymbol{B}\\
                 \boldsymbol{B}_i
            \end{array}\right]$
            \STATE $E = E - \|\boldsymbol{B}_i\|^2_\nF$  
            \STATE \textbf{if} $E < \varepsilon^2$ \textbf{then stop}
            \ENDFOR
        
        \end{algorithmic}
        \end{algorithm}

Due to Theorem 2 and the orthogonality of $\bQ$, we have the following proposition. 
        \begin{MyProp}\label{thm_ei}
      The {\randQBei} algorithm (Algorithm 2) is equivalent to the {\randQBb} algorithm, when executed in exact arithmetic.
        \end{MyProp}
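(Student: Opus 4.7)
The plan is to argue by induction on the iteration index $i$ that the matrices $\bQ^{(i)}$, $\bB^{(i)}$ (and hence the intermediate $\bQ_i$, $\bB_i$) produced by {\randQBei} coincide with those produced by {\randQBb}, given the same random draws $\bOmg_i$, and that the stopping tests are equivalent. The base case $i=0$ is trivial since both algorithms start from empty $\bQ,\bB$.

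For the inductive step, assume that before iteration $i$ the two algorithms share the same $\bQ$ and $\bB$. Compare the two ``sketch'' steps: {\randQBb} forms $(\bA-\bQ\bB)\bOmg_i$, while {\randQBei} forms $\bA\bOmg_i - \bQ(\bB\bOmg_i)$. These are literally the same matrix by distributivity, so after passing through \texttt{orth} (and re-orthogonalization against $\bQ$) one obtains an identical $\bQ_i$ in exact arithmetic. Next, compare the computation of $\bB_i$: {\randQBb} sets $\bB_i = \bQ_i^{\nT}(\bA-\bQ\bB)$, while {\randQBei} sets $\bB_i = \bQ_i^{\nT}\bA$. Since the re-orthogonalization step enforces $\bQ_i^{\nT}\bQ = \bO$ exactly, we have $\bQ_i^{\nT}(\bA-\bQ\bB) = \bQ_i^{\nT}\bA - (\bQ_i^{\nT}\bQ)\bB = \bQ_i^{\nT}\bA$, so the two definitions coincide. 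The updates of $\bQ$ and $\bB$ by stacking are identical, closing the induction.

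It remains to match the stopping criteria. {\randQBb} halts when the Frobenius norm of the explicitly stored residual $\bA-\bQ^{(i)}\bB^{(i)}$ falls below $\varepsilon$, whereas {\randQBei} halts when the scalar indicator $E^{(i)}$ drops below $\varepsilon^{2}$. By Theorem~\ref{pro:alg1}, $E^{(i)} = \|\bA - \bQ^{(i)}\bB^{(i)}\|_{\nF}^{2}$ in exact arithmetic, so the two tests trigger at exactly the same iteration, and both algorithms return the same $(\bQ,\bB)$.

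I expect no genuinely hard step here; the main thing to be careful about is the orthogonality identity $\bQ_i^{\nT}\bQ = \bO$ used to drop the correction term in step~7, which holds only in exact arithmetic and is precisely why the proposition is stated that way. The Frobenius-norm identity needed for the stopping test is already furnished by Theorem~\ref{error_estimation} and Theorem~\ref{pro:alg1}, so the argument reduces to bookkeeping.
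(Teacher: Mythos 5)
Your proof is correct and follows essentially the same route as the paper, which justifies the proposition in one line (``Due to Theorem 2 and the orthogonality of $\bQ$'') using exactly the two ingredients you identify: the identity $\bQ_i^{\nT}(\bA-\bQ\bB)=\bQ_i^{\nT}\bA$ from orthogonality, and Theorem~\ref{pro:alg1} to match the stopping tests. Your induction simply makes explicit the bookkeeping the paper leaves implicit.
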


Assuming that multiplying two dense matrices of sizes $m\times n$ and $n\times l$ costs $C_{mm}mnl$ flops, and performing an economic QR factorization of an $m\times n$ dense matrix costs $C_{qr}mn\min(m,n)$ flops, we can analyze the flop counts of the relevant algorithms and compare their performance for handling a \emph{dense} $\bA$. If $T_{randQB}$ and $T_{randQB\_b}$ denote the runtime of the algorithms \texttt{randQB}  and \texttt{randQB\_b} respectively~\cite{Martin2015},
\begin{equation}\label{time_randqb}
T_{randQB} \sim 2C_{mm}mnl+ C_{qr}ml^2~ ,
\end{equation}
\begin{equation}\label{time_randqb_b}
T_{randQB\_b} \sim 3C_{mm}mnl+C_{mm}ml^2+\frac{2}{t}C_{qr}ml^2 ,
\end{equation}
where $l$ is the number of columns in the resulting matrix $\bQ$, and $t$ satisfies $l=tb$. Note that $b$ is practically much smaller than $l$ (say, $b=$10 or 20), although the optimal choice of block size depends strongly on what hardware is
used \cite{Martin2015}.
 %Note that $l$ can be a little bit larger than $k$ if a rank-$k$ truncated SVD is pursued.

%For the single-vector randQB algorithm in Figure 2(a), similar analysis applies. It is easy to see that its runtime
%\begin{equation}\label{time_randqb_sv}
%T_{randQB\_sv} \sim (3C_{mm}+1)mnl~ .
%\end{equation}
%Here, ``+1'' stands for the matrix subtraction in line (9) of the single-vector randQB algorithm. 

For the {\randQBei} algorithm, the runtime can be similarly depicted:
\begin{equation}\label{time_alg1}
%\begin{split}
                \begin{aligned}
T_{randQB\_EI} %\sim 2C_{mm}mnl + C_{mm}(4m+2n)b^2\sum_{i=1}^{t-1}i+ \frac{2}{t}C_{qr}ml^2 \\
                 \sim 2C_{mm}mnl+ C_{mm}(2m+n)l^2+\frac{2}{t}C_{qr}ml^2~.
%\end{split}
                \end{aligned}
\end{equation}
Because $l$ is usually much smaller than $m$ and $n$, we see that the flop count of {\randQBei} is about 2/3 of that of {\randQBb}, and is comparable to that of the basic randQB algorithm. Notice that $C_{qr}$ is several times larger than $C_{mm}$. So, the flop count of {\randQBei} could be smaller than that of {\randQB} in the situation where $t$ is a large number. If $\bA$ is sparse, this would more possibly happen, because the \texttt{randQB} algorithm loses the benefit brought by the BLAS-3 operation.

% However, the flop count is not the only factor affecting the actual performance of an algorithm. Algorithm 1 updates a vector per iteration, which means the \emph{blocking} is not used for the loop. This is not optimal with respect to the runtime efficiency. And, this Gram-Schmidt procedure based scheme is susceptible to numerical round-off error. The both issues are resolved by the blocked version of the algorithm. We will discuss them in Section 4.

The advantage of {\randQBei} over  {\randQBb} becomes more prominent if $\bA$ is a sparse matrix. With the proposed error indicator, we no longer need the residual matrix. In contrast, it is always a dense matrix in the {\randQBb} algorithm, and costs much larger memory and induces much more computations. % than the sparse $\bA$. %Besides, since the error matrix is not stored, the memory saving of {\randQBei} algorithm becomes significant if $\bA$ is of large size.

\textbf{\textit{Remark} 3.2}. Algorithm 1 can also be combined with the {\qrcp} algorithm \cite{Duersch2015}. Although {\qrcp} generates a column permutation matrix as well, it does not affect the Frobenius norm of each partial or the entire matrix of $\boldsymbol{B}$. 
Therefore, this will produce another efficient algorithm for adaptive low-rank matrix approximation, which also adapts to sparse matrices.

\subsection{Floating point arithmetic}
Below we discuss the accuracy of the error indicator in floating point arithmetic. We use $\mathcal{E}(\cdot)$ and $\mathcal{E}_r(\cdot)$ to denote the functions of error and relative error, respectively.
%$\hat{x}$ to denote the actual (calculated) value of any theoretical quantity $x$. And, 

As the error indicator $E=\|\bA\|^2_\nF- \|\boldsymbol{B}\|^2_\nF$, its calculated value $\hat{E}$ cannot be accurate when $E$ is very small, due to the cancellation in calculation.
 In floating-point arithmetic, the machine precision $\epsilon_{mach}$ characterizes the maximum relative error of converting a real number to its floating-point representation, i.e.
\begin{equation} \label{eq:epsilon_mach}
\forall x, ~ ~ |\mathcal{E}_r(x)| \le \epsilon_{mach}.
\end{equation}
According to the definition of Frobenius norm, $\| \bA \|^2_\nF$ is the summation of squares of matrix entries. Therefore, the relative error of $\| \bA \|^2_\nF$ is bounded by $2\epsilon_{mach}$. The same thing applies to $\| \bB \|^2_\nF$. So,
\begin{equation}
\begin{aligned}
|\mathcal{E}(E)|=|\mathcal{E}(\|\bA\|^2_\nF- \|\boldsymbol{B}\|^2_\nF)| &\le |\mathcal{E}(\|\bA\|^2_\nF)|+|\mathcal{E}(\|\boldsymbol{B}\|^2_\nF) |\\
 &\le  2\epsilon_{mach}(\|\bA\|^2_\nF+ \|\bB\|^2_\nF) \\
 &<  4\epsilon_{mach}\|\bA\|^2_\nF .
\end{aligned}
\end{equation}   
%where the last inequality holds except $E$ is extremely small. 
If we want to guarantee that $E$ has a relative error no more than $\delta$, i.e., $|\mathcal{E}(E)| \le \delta E$, we shall enforce
\begin{equation}
\begin{aligned}
4\epsilon_{mach}\|\bA\|^2_\nF \le \delta E.
\end{aligned}
\end{equation}   
This means the preset accuracy tolerance $\varepsilon$, which is larger than $\sqrt{E}$  at the termination of the algorithm, should satisfy:
\begin{equation}
\begin{aligned}
\varepsilon > \sqrt{E} \ge \sqrt{\frac{4\epsilon_{mach}\|\bA\|^2_\nF}{\delta}}=\sqrt{\frac{4\epsilon_{mach}}{\delta}}\|\bA\|_\nF.
\end{aligned}
\end{equation} 

%With $\epsilon_{mach} \approx 1.11 \times 10^{-16}$ in double precision, 
So, we obtain the following Theorem.
    \begin{Mytheorem}\label{error_indicator}
        Suppose matrix $\bA$ and accuracy tolerance $\varepsilon$ are the input to the {\randQBei} algorithm. If $\varepsilon > \sqrt{\frac{4\epsilon_{mach}}{\delta}}\|\bA\|_\nF$, the relative error of the calculated error indicator $E$ must be no more than $\delta$.  E.g., if $\varepsilon > 2.1\times 10^{-7}\|\bA\|_\nF$, the error of $E$ is within $1\%$ in the double-precision floating arithmetic, where $\epsilon_{mach} \approx 1.11 \times 10^{-16}$.
    \end{Mytheorem}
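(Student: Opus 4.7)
The plan is to bound the floating-point absolute error in the computed value $\hat{E}$ of $E = \|\bA\|_\nF^2 - \|\bB\|_\nF^2$ and then convert that into a relative-error bound by using the stopping criterion of Algorithm \ref{alg:randQBei} to place a lower bound on $E$. The cancellation in the subtraction is the only real source of trouble, so the whole argument will pivot on how small $E$ is allowed to become before the algorithm terminates.

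First, I would appeal to the machine-precision bound (\ref{eq:epsilon_mach}) and the fact that $\|\bA\|_\nF^2$ and $\|\bB\|_\nF^2$ are sums of squared matrix entries: each entry suffers a relative representation error of at most $\epsilon_{mach}$, and the squaring doubles it, so $|\mathcal{E}_r(\|\bA\|_\nF^2)| \le 2\epsilon_{mach}$ and similarly for $\|\bB\|_\nF^2$. Propagating these through the subtraction with the triangle inequality on absolute errors yields
\begin{equation*}
|\mathcal{E}(E)| \le 2\epsilon_{mach}\bigl(\|\bA\|_\nF^2 + \|\bB\|_\nF^2\bigr).
\end{equation*}
To collapse this into a clean bound involving only $\|\bA\|_\nF$, I would invoke Theorem \ref{error_estimation}: the identity $\|\bA - \bQ\bB\|_\nF^2 = \|\bA\|_\nF^2 - \|\bB\|_\nF^2 \ge 0$ gives $\|\bB\|_\nF \le \|\bA\|_\nF$, hence $|\mathcal{E}(E)| < 4\epsilon_{mach}\|\bA\|_\nF^2$.

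The final step is to translate the absolute bound into the desired relative bound $|\mathcal{E}(E)| \le \delta E$. The sufficient condition is $4\epsilon_{mach}\|\bA\|_\nF^2 \le \delta E$, so I would argue that if the algorithm has not yet terminated then $E \ge \varepsilon^2$ and the hypothesis $\varepsilon > \sqrt{4\epsilon_{mach}/\delta}\,\|\bA\|_\nF$ immediately yields $\delta E \ge \delta \varepsilon^2 > 4\epsilon_{mach}\|\bA\|_\nF^2$. Substituting the given numerical values (taking $\delta = 0.01$ and $\epsilon_{mach}\approx 1.11\times 10^{-16}$) then produces the quoted threshold $2.1\times 10^{-7}\|\bA\|_\nF$.

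The only delicate point is justifying that we may assume $E \ge \varepsilon^2$ at the moment when we care about the relative error: if $E$ is allowed to drop much below $\varepsilon^2$ during the iteration, the cancellation could spoil the accuracy of $\hat{E}$ and corrupt the stopping test itself. This is handled by the structure of Algorithm \ref{alg:randQBei}, which terminates as soon as $\hat{E} < \varepsilon^2$, so the relative-error guarantee is needed precisely in the regime $E \gtrsim \varepsilon^2$; the hypothesis on $\varepsilon$ is exactly what makes this regime compatible with the cancellation bound.
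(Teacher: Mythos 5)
Your proposal is correct and follows essentially the same route as the paper: the same $2\epsilon_{mach}$ relative-error bound on each squared Frobenius norm, the same triangle-inequality step giving $|\mathcal{E}(E)| < 4\epsilon_{mach}\|\bA\|_\nF^2$ (with $\|\bB\|_\nF \le \|\bA\|_\nF$ from Theorem \ref{error_estimation}), and the same conversion to a relative bound via $\delta E \ge \delta\varepsilon^2$ at the stopping threshold. Your closing remark about why the regime $E \gtrsim \varepsilon^2$ is the relevant one is slightly more explicit than the paper's phrasing, but the argument is the same.
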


Notice that, as an error indicator for the fixed-precision problem, $E$ should have sufficient accuracy (e.g., with relative error $\sim 1\%$ or less). Otherwise, the outputted QB factorization would not satisfy the preset accuracy tolerance.

Besides, the orthogonality of $\bQ$ also affects the accuracy of $E$. As the number of columns in $\bQ$ increases, its orthogonality gradually degrades. This issue occurs for $\bQ$ produced either by a single run of QR factorization (based on Householder transformation) or by a Gram-Schmidt procedure followed by the re-orthogonalization step. We will investigate its effect in the following experiment.

%Lastly, $\|\bA\|^2_\nF$ and $\|\bB\|^2_\nF$ (equivalent to the summation in (\ref{eq:Eformula})) should be carefully calculated. Otherwise, their relative error may exceed $2\epsilon_{mach}$, inducing extra error. For example, we shall not calculate the Frobenius norm and then its square. Instead, directly calculating the sum of squares of matrix entries deliver more accurate result.  

%The following numerical experiment compares the values of error indicator and the actual error of QB approximation. 
An $n\times n$ matrix $\bA$ is constructed to have singular values according to a decaying exponential. Two instances are tested, with singular value $\sigma_j= e^{-j/20}$ and $\sigma_j= e^{-j/200}$, $j= 1, 2, \cdots $, respectively. The results obtained from executing {\randQBei} algorithm ($b=10$) and {\randQB} algorithm with different values of rank parameter $l$ are shown in Figure 2. Note that for some large value of $l$, the error indicator can be of negative value, such that it cannot be drawn in the log-scale plot. From the figure, we can validate the correctness of Theorem 3. Providing that the square of error $\|\bA- \bQ\bB\|^2_\nF / \|\bA\|^2_\nF > (2.1\times 10^{-7})^2 = 4.4\times 10^{-14}$, the error indicator matches the square of error very well. This holds even when $l$ is larger than $3,000$, which corresponds to the situation with larger accumulated round-off error. In Figure 2, \begin{figure*}[h]
\centering
\subfigure[$n=3,000,~ \sigma_j= e^{-j/20}$] {\includegraphics[width=2.52in]{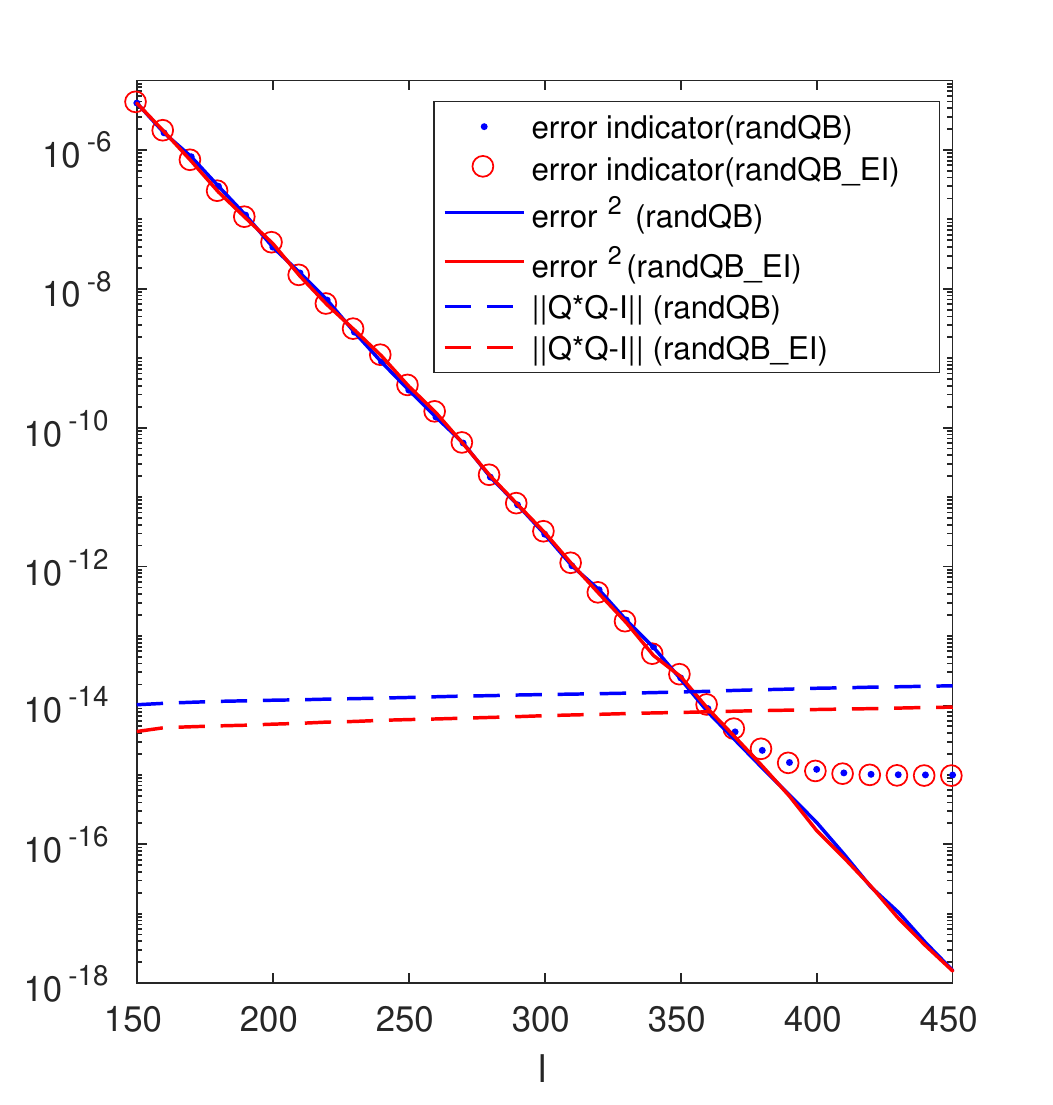}}
\subfigure[$n=20,000,~ \sigma_j= e^{-j/200}$] {\includegraphics[width=2.52in]{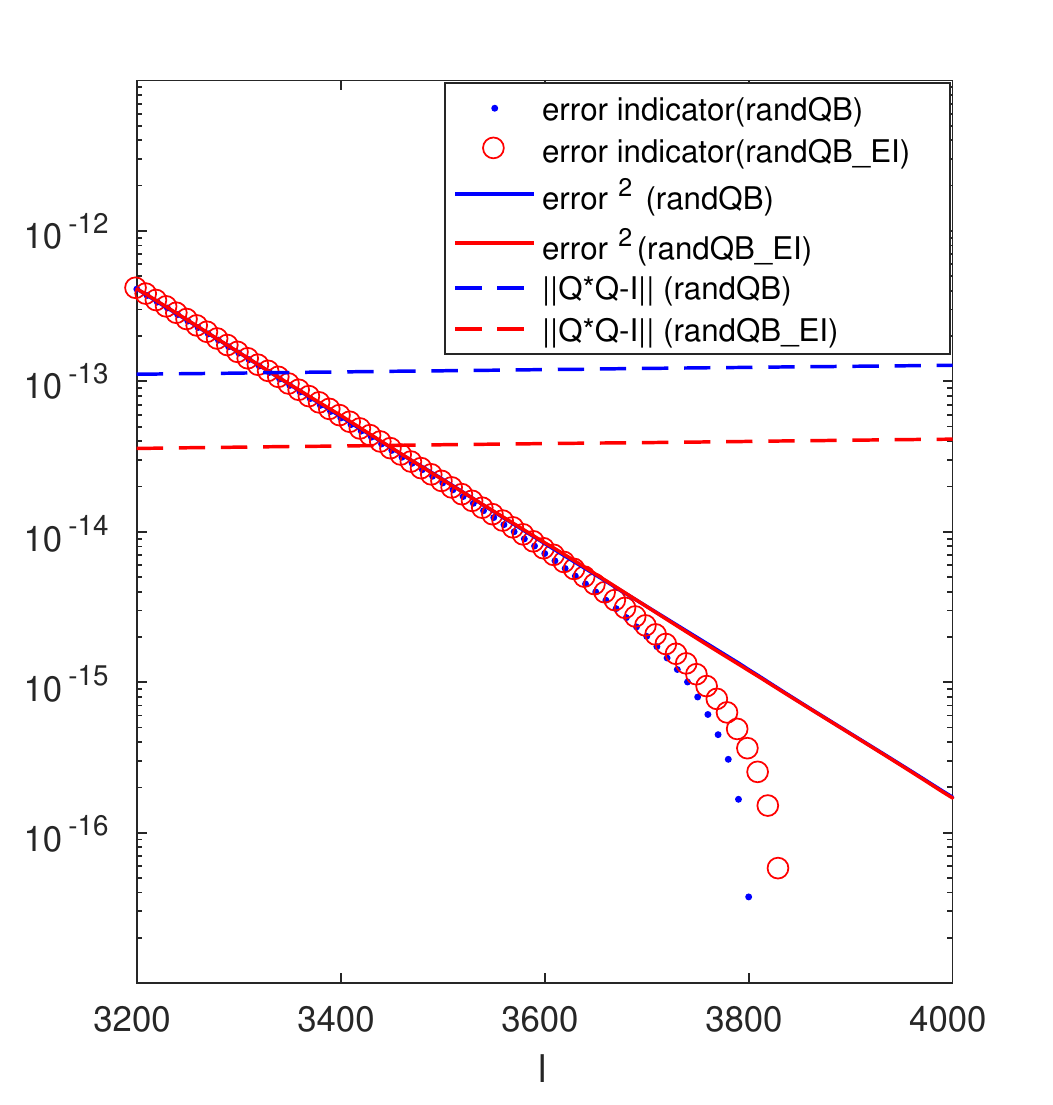}}
\caption{Normalized error indicator $E / \|\bA\|^2_\nF$, error square $\|\bA- \bQ\bB\|^2_\nF / \|\bA\|^2_\nF$ and the loss of orthogonality of $\bQ$ v.s. $\bQ$'s column number $l$ in {\randQB} algorithm and {\randQBei} algorithm.}
\label {fig2}
\end{figure*} the value of $\|\bQ^\nT \bQ- \bI \|_\infty$ \footnote{This measure of loss of orthogonality follows Cleve Moler's blog with title ``Compare Gram-Schmidt and Householder Orthogonalization Algorithms'' posted on Oct. 17, 2016.} 
 is also plotted, which reveals the loss of orthogonality of $\bQ$. The results show that this issue is not severe in both the {\randQB} and {\randQBei} algorithms, although it gradually increases as the columns of $\bQ$ are increased.
%\begin{figure*}[h]
%\centering
%\subfigure[$n=3000, \sigma_j= e^{-j/20}$] {\includegraphics[width=2.52in]{figEIa3.eps}}
%\subfigure[$n=4000, \sigma_j= e^{-j/80}$] {\includegraphics[width=2.52in]{figEIb3.eps}}
%\caption{Normalized error indicator $E / \|\bA\|^2_\nF$, error $\|\bA- \bQ\bB\|^2_\nF / \|\bA\|^2_\nF$ and the loss of orthogonality of $\bQ$ v.s. the column number $l$ of matrix $\bQ$ in {\randQB} algorithm and {\randQBei} algorithm.}
%\label {fig3}
%\end{figure*}

\textbf{\textit{Remark} 3.3}. Theorem 3 suggests the limitation of the error indicator and the proposed algorithms for the fixed-precision problem. It means that the efficient framework for adaptive rank determination would not work, in double-precision floating arithmetic, for the problem with the accuracy tolerance $\varepsilon$ less than $2.1\times 10^{-7}\|\bA\|_{\nF}$.

\section{A pass-efficient algorithm for the QB factorization}

    The technique in last section efficiently solves the fixed-precision problem measured in Frobenius norm. However, the {\randQBei} algorithm is not suitable for the scenarios where accessing matrix entries is very expensive (e.g., $\bA$ is too large and has to be stored on hard disk), as it visits $\bA$ for considerable times (see Table 1).
%handling very large matrix stored in slow memory, as it repeatedly visits the input matrix. 
In this section, we propose a pass-efficient algorithm which largely reduce the number of passes over $\bA$.   
    
%    As we have mentioned, most randomized blocked algorithms visit matrix $\bA$ for $O(k/b)$ times. They lose the pass-efficient property, which is favorable for handling large matrices.     
%    In this section, we derive a pass-efficient blocked algorithm for QB factorization (called {\randQBfp}). It is equivalent to the {\randQBb} algorithm, but reduces the number of passes over $\bA$ to the fewest. We will also discuss its usage in solving the fixed-precision problem.

    \subsection{The version without re-orthogonalization}
We first consider the fixed-rank problem where the rank parameter $k$ is given. A preliminary pass-efficient algorithm (presented as Algorithm 3) can be derived from the {\randQBb} or {\randQBei} algorithm. 
The steps correspond to those of {\randQBei} in Algorithm 2, one by one, except that the applications of $\bA$ are moved out of the loop, and step 6 for re-orthogonalization is ignored.
 In Algorithm 3,
 %the products of $\bA$ and random vectors, as well as $\bA^T\bA$ and the random vectors, are pre-computed in steps 3 and 4.  Then, 
 step 6 is the same as step 4 of {\randQBei} algorithm, and steps 7 and 8 correspond to step 5 of {\randQBei}. In step 9, $\bR_i^{-T}$ is the inverse of the transpose of an upper triangular matrix $\bR_i$. This step can be regarded as solving linear equations with the coefficient matrix $\bR_i^T$, which is implemented by ``$\setminus$'' operator in Matlab.
 % Its key point is to pre-compute the products of $\bA$ and random vectors, as well as the products of $\bA^T\bA$ and the random vectors. Then, in the loop we take columns from the pre-computed matrices $\boldsymbol{\Omega}, \boldsymbol{G}$ and $\boldsymbol{H}$, to get the needed information. For example, with $\boldsymbol{G}$ the 7th and 8th steps in Algorithm 2 perform the same function as step (3) in the \texttt{randQB\_b} algorithm. The equivalence between the 9th step in Algorithm 2 and step (5) in the \texttt{randQB\_b} algorithm is also straightforward. With the matrix-vector multiplications lumped into two matrix-matrix multiplications, more efficiency benefit from the BLAS-3 operation is expected while handling a dense $\bA$. Algorithm 2 has the property of \emph{pass-efficiency}, as the original {\randQB} algorithm \cite{Halko2011}. 
        \begin{algorithm}
        \caption{A preliminary pass-efficient algorithm for fixed-rank QB factorization}
        \label{alg3}
        \begin{algorithmic}[1]
            \REQUIRE $\boldsymbol{A}\in \mathbb{R}^{m\times n}$,  $k$, $s$, block size $b$.
            \ENSURE $\boldsymbol{Q}$, $\boldsymbol{B}$. %\final{of size $m\times k$ and $k\times n$}, ~s.t. \st{$\|\boldsymbol{A}- \boldsymbol{QB}\|$}\st{$<$}\st{$\varepsilon$.}\final{$\bA\approx \bQ\bB$.}
            \STATE $\boldsymbol{Q} = [~]; ~ \boldsymbol{B} = [~]; ~ l= k+s; $
%            \STATE $\boldsymbol{\Omega}= $ randn($n, \tilde{l}$), where $\tilde{l}$ is a sufficiently large number.
            \STATE $\boldsymbol{\Omega}= $ randn($n$, $l$)
        \STATE $\boldsymbol{G} = \boldsymbol{A}\boldsymbol{\Omega}$
        \STATE $\boldsymbol{H} = \boldsymbol{A}^T\boldsymbol{G}$
%            \STATE \st{$E = \|\boldsymbol{A}\|^2$}
            \FOR{$i=1, 2, 3, \cdots, l/b$}
            %\FOR{$\sqrt{E} > \varepsilon$}
            \STATE $\boldsymbol{\Omega}_i = \boldsymbol{\Omega}(:,~ (i-1)b+1:ib)$
            \STATE $\boldsymbol{Y}_i = \boldsymbol{G}(:,~ (i-1)b+1:ib) - \bQ(\boldsymbol{B\Omega}_i)$
			\STATE $[\boldsymbol{Q}_i,~\boldsymbol{R}_i]= $ qr($\boldsymbol{Y}_i$)
			\STATE $\boldsymbol{B}_i= \boldsymbol{R}_i^{-T}(\boldsymbol{H}(:,~ (i-1)b+1:ib)^T- \boldsymbol{\Omega}_i^T\boldsymbol{B}^T\boldsymbol{B}) $
            \STATE $\boldsymbol{Q} = [\boldsymbol{Q}, ~\boldsymbol{Q}_i]$
            \STATE $\boldsymbol{B} = \left[ \begin{array}{c}
                 \boldsymbol{B}\\
                 \boldsymbol{B}_i
            \end{array}\right]$
%                \STATE \st{$E = E - \|\boldsymbol{B}_i\|^2$}
%				\STATE \st{\textbf{if} $\sqrt{E} < \varepsilon$ \textbf{then stop}}
            \ENDFOR
        
        \end{algorithmic}
        \end{algorithm}
        
      % With the following proposition, we establish the correctness of Algorithm 2. 

Because $\bY_i= \bQ_i \bR_i$, 
\begin{equation}\label{eq:Qi_Yi_relation}
\bQ_i= \bY_i\bR_i^{-1} . 
\end{equation}
Substituting it into step 7 of the {\randQBei} algorithm, we have:
\begin{equation}
\begin{aligned}
\bB_i &= (\bY_i\bR_i^{-1})^T \bA \\
&= \bR_i^{-T} \bY_i^T \bA \\
&= \bR_i^{-T}(\bG(:,~ (i-1)b+1:ib)^T- \boldsymbol{\Omega}_i^T\boldsymbol{B}^T\bQ^T )\bA \\
&= \bR_i^{-T}((\bA^T \bG(:,~ (i-1)b+1:ib))^T- \boldsymbol{\Omega}_i^T\boldsymbol{B}^T\bQ^T \bA) \\
&= \bR_i^{-T} (\bH(:,~ (i-1)b+1:ib)^T- \boldsymbol{\Omega}_i^T\boldsymbol{B}^T\bB) .
\end{aligned}
\end{equation}
This means that step 9 of Algorithm 3 is equivalent to step 7 of {\randQBei}. Therefore, we obtain the following proposition. 
        
        \begin{MyProp}\label{thm1_block}
           Algorithm 3 is mathematically equivalent to the fixed-rank version of {\randQBei} algorithm without re-orthogonalization.
        \end{MyProp}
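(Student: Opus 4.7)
The plan is to prove Proposition 4 by induction on the loop index $i$, showing that after each iteration Algorithm 3 produces the same pair $(\bQ,\bB)$ as the fixed-rank variant of {\randQBei} in which step 6 (re-orthogonalization) is removed. The base case is immediate, since both algorithms initialize $\bQ=[~]$ and $\bB=[~]$. A useful invariant, which follows directly from the construction in {\randQBei} and does not require orthogonality across blocks, is $\bB=\bQ^\nT\bA$: each $\bB_j$ is set to $\bQ_j^\nT\bA$, and the concatenation gives $\bB=[\bQ_1,\dots,\bQ_i]^\nT\bA=\bQ^\nT\bA$.

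For the inductive step I would first verify that $\bY_i$ in step 7 of Algorithm 3 coincides with the matrix orthonormalized in step 5 of {\randQBei}. Since $\bG=\bA\boldsymbol{\Omega}$ is computed once outside the loop, the $i$-th block column satisfies $\bG(:,(i-1)b+1:ib)=\bA\boldsymbol{\Omega}_i$, whence $\bY_i=\bA\boldsymbol{\Omega}_i-\bQ(\bB\boldsymbol{\Omega}_i)$, which is exactly the argument of \texttt{orth} in {\randQBei}. Hence step 8 of Algorithm 3 and step 5 of {\randQBei} produce the same $\bQ_i$, with the QR factors related by $\bQ_i=\bY_i\bR_i^{-1}$ as in (\ref{eq:Qi_Yi_relation}). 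Second, using $\bH=\bA^\nT\bG=\bA^\nT\bA\boldsymbol{\Omega}$, the $i$-th block column gives $\bH(:,(i-1)b+1:ib)=\bA^\nT\bA\boldsymbol{\Omega}_i$; substituting $\bQ_i=\bY_i\bR_i^{-1}$ into $\bQ_i^\nT\bA$ and invoking the invariant $\bB=\bQ^\nT\bA$ to eliminate the last remaining occurrence of $\bA$ yields precisely the formula for $\bB_i$ in step 9 of Algorithm 3. This is the chain of substitutions already displayed immediately before the proposition.

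The only real obstacle is bookkeeping: every reference to $\bA$ on the right-hand side of $\bQ_i^\nT\bA$ must be replaced by a quantity that is either pre-computed ($\bG$ or $\bH$) or already stored ($\bQ$, $\bB$), otherwise the pass-efficient character of the algorithm would be lost. Once this rewriting is carried out cleanly, combining the two identities above equates the new blocks $(\bQ_i,\bB_i)$ produced by the two algorithms, closes the induction, and establishes the claimed mathematical equivalence. Floating-point issues (loss of orthogonality of $\bQ$, cancellation in $\bR_i^{-T}$) are deliberately outside the scope of the proposition, which asserts equivalence in exact arithmetic only and motivates the re-orthogonalized variant developed in the next subsection.
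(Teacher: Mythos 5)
Your proposal is correct and follows essentially the same route as the paper: the key step is the identity $\bB_i=(\bY_i\bR_i^{-1})^\nT\bA=\bR_i^{-T}\bigl(\bH(:,(i-1)b+1:ib)^\nT-\boldsymbol{\Omega}_i^\nT\bB^\nT\bB\bigr)$ obtained via $\bG=\bA\boldsymbol{\Omega}$, $\bH=\bA^\nT\bG$ and the invariant $\bB=\bQ^\nT\bA$, which is exactly the substitution chain the paper displays before the proposition. Your only addition is to make the induction over the loop index and the block-wise invariant explicit, which the paper leaves implicit.
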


	   \st{For the stopping criterion, it is just the same as Algorithm 1.}
%	   From Proposition 2, we see that Algorithm 3 is mathematically equivalent to the fixed-rank version of {\randQBei} algorithm without re-orthogonalization.
	   
	   \subsection{The version with re-orthogonalization}  In reality, the loss of orthogonality among the columns of $\{\bQ_1,\bQ_2, \cdots \}$ occurs due to the accumulation of round-off error. This means the re-orthogonalization step, i.e. step 6 in Algorithm 2, cannot be ignored. Below we derive the revisions to Algorithm 3 to address this issue. 

The re-orthogonalization step can be expressed as
\begin{equation}\label{eq:re-orth1}
\scalebox{1}{$\tilde{\bQ_i}\tilde{\boldsymbol{R}_i}=\boldsymbol{Q}_i-\bQ\bQ^T\boldsymbol{Q}_i$},
\end{equation}
where $\tilde{\bQ_i} \neq \bQ_i$ and $\tilde{\boldsymbol{R}_i} \neq \boldsymbol{I}$ due to the loss of orthogonality. $\tilde{\bQ_i}$ is better orthogonal to the previously generated $\{\bQ_1, \bQ_2, \cdots, \bQ_{i-1}\}$ than $\bQ_i$.
Now, we need to derive a formula for calculating $\boldsymbol{B}_i$ which does not involve $\bA$ explicitly. Based on (\ref{eq:Qi_Yi_relation}), 
\begin{equation}\label{eq:tildeQi}
\scalebox{1}{$\tilde{\bQ_i}= (\boldsymbol{I}-\bQ\bQ^T) \boldsymbol{Y}_i \boldsymbol{R}_i^{-1} \tilde{\boldsymbol{R}}_i^{-1}$}.
\end{equation}
%Then, the new $\boldsymbol{B}_i$ is calculated as: 
% \! is used to adjust space before/after =, +, -.
So, the corresponding formula for $\bB_i$ is:
\begin{equation}\label{eq:newBi}
            \begin{aligned}
\scalebox{1}{$\tilde{\boldsymbol{B}_i}$} =& \scalebox{0.9}{$\tilde{\bQ}_i^T \bA$}  \\
=& \scalebox{1}{$(\tilde{\boldsymbol{R}_i}\boldsymbol{R}_i)^{-T} \boldsymbol{Y}_i^T(\boldsymbol{I}-\bQ\bQ^T) \bA$}  \\
=& \scalebox{1}{$(\tilde{\boldsymbol{R}_i}\boldsymbol{R}_i)^{-T} (\boldsymbol{\Omega}_i^T \bA^T - \boldsymbol{\Omega}_i^T \boldsymbol{B}^T \bQ^T) (\bA- \bQ\bB)$}, 
            \end{aligned}
\end{equation}
where the formula of $\boldsymbol{Y}_i$ and the equality $\bB=\bQ^T\bA$ is taken usage of.
The product of the last two brackets can be further simplified.
\begin{equation}\label{eq:newBi2}
            \begin{aligned}
& \scalebox{1}{$(\boldsymbol{\Omega}_i^T \bA^T - \boldsymbol{\Omega}_i^T \boldsymbol{B}^T \bQ^T) (\bA- \bQ\bB)$} \\
& = \scalebox{1}{$\boldsymbol{H}_i^T - \boldsymbol{G}_i^T \bQ\bB -\boldsymbol{\Omega}_i^T \bB^T\bB+ \boldsymbol{\Omega}_i^T \bB^T\bQ^T\bQ\bB $} \\
& = \scalebox{1}{$\boldsymbol{H}_i^T - \boldsymbol{Y}_i^T \bQ\bB - \boldsymbol{\Omega}_i^T \bB^T\bB $}.
            \end{aligned}
\end{equation}
In the deduction, $\boldsymbol{G}_i$ and $\boldsymbol{H}_i$ denote $\boldsymbol{G}(:,(i-1)b+1:ib)$ and $\boldsymbol{H}(:,(i-1)b+1:ib)$ in Algorithm 3, respectively. Therefore,
\begin{equation}\label{eq:newBi3}
            \begin{aligned}
\tilde{\boldsymbol{B}_i} = (\tilde{\boldsymbol{R}_i}\boldsymbol{R}_i)^{-T}(\boldsymbol{H}_i^T - \boldsymbol{Y}_i^T \bQ\bB - \boldsymbol{\Omega}_i^T \bB^T\bB)  ~.
            \end{aligned}
\end{equation} 

%take usage of the formula for $\boldsymbol{Y}_i$ shown as step 7 in Algorithm 2, and the equality $\boldsymbol{B}^{(i-1)}=\left(\bQ^{(i-1)}\right)^T\bA$ which may not hold for large value of $i$ due to round-off error. 

Based on (\ref{eq:re-orth1}) and (\ref{eq:newBi3}), we can derive the version with re-orthogonalization for Algorithm 3. We just need to replace the 9th step with the following steps.

\begin{tabular}{l}
\toprule
9a: $[\boldsymbol{Q}_i,~\tilde{\boldsymbol{R}_i}]= $ qr($\boldsymbol{Q}_i-\bQ(\bQ^T\bQ_i)$)\\ 
9b: $\boldsymbol{R}_i=\tilde{\boldsymbol{R}_i}\boldsymbol{R}_i$ \\
9c: $\boldsymbol{B}_i= \boldsymbol{R}_i^{-T}(\boldsymbol{H}(:,~ (i-1)b+1:ib)^T- \boldsymbol{Y}_i^T\bQ\boldsymbol{B} - \boldsymbol{\Omega}_i^T\boldsymbol{B}^T\boldsymbol{B}) $ \\
\bottomrule
\end{tabular}

\noindent{Notice that $\bQ_i$ and $\boldsymbol{B}_i$ are overwritten to stand for $\tilde{\bQ_i}$ and $\tilde{\boldsymbol{B}_i}$, respectively.}
Based on Proposition 2 and the above deduction, we see that the pass-efficient algorithm with re-orthogonalization is also mathematically equivalent to the fixed-rank versions of \texttt{randQB\_EI} and \texttt{randQB\_b} algorithms.
%        \begin{proposition}\label{thm2_block}
%      Algorithm 4 is algebraically equivalent to the fixed-rank versions of {\randQBei} algorithm and {\randQBb} algorithm, when executed in exact arithmetic.
%        \end{proposition}

This algorithm with \emph{fewer passes} over $\bA$ is called \texttt{randQB\_FP}. 
%Actually, it is a single-pass algorithm, because 
%steps 3 and 4 can be implemented with only one pass over $\bA$. %In contrast, the existing blocked algorithms need $O(l/b)$ passes over $\bA$. 
% The re-orthogonalization induces some extra computation. 
Based on the notations in Section 3, its flop count analysis is as follows.
\begin{equation}\label{time_alg4}
%\begin{split}
                \begin{aligned}
T_{randQB\_FP} % \sim 2C_{mm}mnl + 4C_{mm}(m+n)b^2\sum_{i=1}^{t-1}i+ 2C_{qr}mb^2t \\
                 \sim 2C_{mm}mnl+ 2C_{mm}(m+n)l^2+ \frac{2}{t}C_{qr}ml^2 ~,
%\end{split}
                \end{aligned}
\end{equation}
where $t$ satisfies $l=tb$. % For simplicity, we do not differentiate $l$ and $\tilde{l}$, although the latter is often larger than the former.
% The strategy regarding the $\tilde{l}$ in actual implementation will be discussed in Section 6.
Compared with the {\randQBei} algorithm, the \texttt{randQB\_FP} algorithm has slightly larger flop count. However, while handling a dense $\bA$ its actual runtime may be shorter because it lumps the multiplications with $\bA$. % and the benefit of BLAS-3 operations.
%And, the \texttt{randQB\_FP} algorithm owns the unique single-pass property which is favorable for an extremely large $\bA$.
 
\textbf{\textit{Remark} 4.1}. The round-off error may affect the accuracy of $\bB_i$, and it increases as the number of iterations increases. However, this may not be an issue for practical low-rank approximation problems.  
In Section 5, we will present numerical experiments to validate the effectiveness of the {\randQBfp} algorithm, which shows it works very well for many applications with the rank parameter up to several thousands or the relative Frobenius-norm error of approximation as small as $10^{-7}$.

\textbf{\textit{Remark} 4.2}. The {\randQBfp} algorithm can derive a single-pass algorithm, if matrix $\bA$ is stored in the row-major format or is revealed row(s) by row(s).
Suppose $\bA_{i,:}$ denotes the $i$-th row of $\bA$. With it we have the  $i$-th row of $\bG$, $\bG_{i,:}=\bA_{i,:}\boldsymbol{\Omega}$. Then, because $\bH= \bA^T \bG= \sum_i{\bA_{i,:}^T}\bG_{i,:}$, the $i$-th term in this summation can be obtained. With all rows of $\bA$, in this way we can accomplish steps 3 and 4 in the  {\randQBfp} algorithm with only one pass over $\bA$. It should be pointed out that this algorithm is not a general single-pass algorithm, as it has the restriction of the matrix. For more general single-pass algorithms for low-rank matrix approximation, please refer to the recent work \cite{Tropp2017}. 
%because steps 3 and 4 can be implemented with only one pass over $\bA$.

\subsection{The inclusion of power iteration scheme} The error of randomized QB factorization could be large for the matrix whose singular value decays slowly \cite{Halko2011}. So, the \emph{power iteration scheme}  has been proposed to relieve this weakness \cite{ Martin2015, Halko2011, Rokhlin2009}. Conceptually, the power iteration means replacing  $\bA$ with $(\bA\bA^T)^P\bA$, where $P$ is an integer. However, in floating-point computation any singular components smaller than $\sigma_1\epsilon_{mach}^{1/(2P+1)}$ will be lost. This makes the orthonormalization steps after the applications of $\bA$ and $\bA^T$ necessary, and $P$ should not be set to a large number. Incorporating the power iteration, we have the {\randQBfp} algorithm for the fixed-precision problem presented as Algorithm 4, where the error indicator $E$ is utilized. 

%\final{Finally we point out a drawback of the \texttt{randQB\_FP} procedure. It is due to the inaccuracy in calculating $\boldsymbol{B}_i$, c.f. (\ref{eq:newBi}). The calulation of $\bB_i$ depends on the equality $\bB^{(i-1)}={\bQ^{(i-1)}}^T\bA$. Any inaccuracy of the equality reflects on the discrepancy between $\bQ_i^T\bA$ and $\bB_i$, causing even larger error in $\bB_{i+1}$ in the next iteration. This error accumulates and prevents the approximation error $\|\bA-\bQ^{(i)}\bB^{(i)}\|$ from continually decreasing. Empirically, the approximation error ceases to decrease at the magnitude of $10^{-7}\|\bA\|$ in many cases.} \st{But this hardly occurs before the limitation of the error indicator emerges. That is, for the applications where we pursue an approximation with moderate relative error, say no less than $10^{-7}$, these numerical drawbacks are negligible. In the following section, we will show numerical results to validate the usage and effectiveness of the proposed algorithms.}
        \begin{algorithm}
        \caption{The \texttt{randQB\_FP} with power scheme for the fixed-precision problem}
        \label{algnew5}
        \begin{algorithmic}[1]
            % SHALL WE RENAME REQUIRE AND ENSURE TO INPUT AND OUTPUT BY \RENEWCOMMAND?
            \REQUIRE $\boldsymbol{A}\in \mathbb{R}^{m\times n}$, desired accuracy tolerance $\varepsilon$, block size $b$, power parameter $P$.
            \ENSURE $\boldsymbol{Q}$, $\boldsymbol{B}$,~such that $\|\boldsymbol{A}- \boldsymbol{QB}\|_{\nF}\leq \varepsilon$.
            \STATE $\boldsymbol{Q} = [~];~  \boldsymbol{B} = [~];$
            \STATE $\boldsymbol{\Omega}= $ randn($n, \tilde{l}$), where $\tilde{l}$ is a sufficiently large number.
           % \STATE $\boldsymbol{H} =\boldsymbol{\Omega}$
            \FOR{$i=1:P$}
            \STATE $\boldsymbol{G} =$ orth($\bA\boldsymbol{\Omega}$)
            \STATE $\boldsymbol{\Omega} =$ orth($\bA^T\boldsymbol{G}$)
            \ENDFOR
        \STATE $\boldsymbol{G} = \bA\boldsymbol{\Omega}$
        \STATE $\boldsymbol{H} = \boldsymbol{A}^T\boldsymbol{G}$
            \STATE $E = \|\boldsymbol{A}\|^2_{\nF}$
            \FOR{$i=1, 2, 3, \cdots$}
            %\WHILE{$\sqrt{E} > \varepsilon$}
            %\STATE $i=i+1$
            \STATE $\boldsymbol{\Omega}_i = \boldsymbol{\Omega}(:,~ (i-1)b+1:ib)$
            \STATE $\boldsymbol{Y}_i = \boldsymbol{G}(:,~ (i-1)b+1:ib) - \bQ(\boldsymbol{B\Omega}_i)$
			\STATE $[\boldsymbol{Q}_i,~\boldsymbol{R}_i]= $ qr($\boldsymbol{Y}_i$)
			\STATE $[\boldsymbol{Q}_i,~\tilde{\boldsymbol{R}_i}]= $ qr($\boldsymbol{Q}_i-\bQ(\bQ^T\boldsymbol{Q}_i)$)
			\STATE $\boldsymbol{R}_i=\tilde{\boldsymbol{R}_i}\boldsymbol{R}_i$
			\STATE $\boldsymbol{B}_i= \boldsymbol{R}_i^{-T}(\boldsymbol{H}(:,~ (i-1)b+1:ib)^T- \boldsymbol{Y}_i^T\bQ\boldsymbol{B} - \boldsymbol{\Omega}_i^T\boldsymbol{B}^T\boldsymbol{B}) $
            \STATE $\boldsymbol{Q} = [\boldsymbol{Q}, ~\boldsymbol{Q}_i]$
            \STATE $\boldsymbol{B} = \left[ \begin{array}{c}
                 \boldsymbol{B}\\
                 \boldsymbol{B}_i
            \end{array}\right]$
                \STATE $E = E - \|\boldsymbol{B}_i\|^2_{\nF}$
			  \STATE \textbf{if} $\sqrt{E} < \varepsilon$ \textbf{then stop}
            \ENDFOR
            %\STATE $k= it$
        
        \end{algorithmic}
        \end{algorithm}

In Algorithm 4, a sufficient large value of $\tilde{l}$ should be set according to problem-specific experience and the concern of computing time. If the set $\tilde{l}$ is not large enough for attaining the specified accuracy criterion, we need to re-generate the $\boldsymbol{\Omega}$ matrix and rerun the algorithm to collect additional columns/rows of $\bQ$ and $\bB$. This situation and the power scheme both increase the number of passes over $\bA$. But compared to other algorithms for the fixed-precision problem, this fixed-precision {\randQBfp} algorithm involves much fewer passes over $\bA$. 

%For the \texttt{randQB\_FP} with power scheme, the number of passes over matrix $\bA$ increases to $2P+1$.\footnote{This can be further reduced because it is possible to apply less frequent orthogonalization in the algorithm \cite{Voronin2015}.} In many scenarios, $P=1$ or $2$ brings sufficient accuracy. For the {\randQBei} algorithm, we can similarly apply the power scheme, just like in \cite{Martin2015}.

% \include{sect5}
% \include{sect6}

\section{Numerical results}
    In this section we compare the proposed algorithms against several existing algorithms in terms of execution time, memory usage and accuracy. All experiments are carried out on a Linux server with two 12-core Intel Xeon E5-2630 CPUs @ 2.30 GHz, and 32GB RAM. For comparison of speed, the proposed algorithms have been implemented in C based on the codes shared by the authors of \cite{Martin2015, RandQR_code}. The program is coded with OpenMP derivatives, and compiled with the Intel ICC compiler with MKL libraries \cite{ICC}, to take full advantage of the multi-core CPUs. 
The QR factorization and other basic linear algebra operations are implemented through LAPACK routines which are automatically executed in parallel.
    \subsection{Comparison of speed} 
        We compute the QB factorization of an $n\times n$ matrix $\bA$. Notice the singular value distribution of matrix is immaterial for this runtime comparison. Four algorithms are compared:
\begin{itemize}
            \item The \texttt{randQB} algorithm in Figure 1(a);
            \item The \texttt{randQB\_b} algorithm in Figure 1(b), obtained from \cite{RandQR_code};
            \item The \texttt{randQB\_EI} algorithm presented in Section 3; 
            \item The \texttt{randQB\_FP} algorithm presented in Section 4.
\end{itemize}
We compare their speed using both dense and sparse matrices, both as a function of the dimension of the matrix and the parameter $l$ denoting the number of the output $\bQ$'s columns.
The block size is $b=20$ for the \texttt{randQB\_b}, \texttt{randQB\_EI} and \texttt{randQB\_FP} algorithms.   
For each runtime measurement, the average time over 20 runs is reported. Notice that the compared {\randQBb} algorithm is an efficient parallel  implementation open-sourced in \cite{RandQR_code}, also based on Intel MKL libraries. 
% We did not test the partial QR factorization, because the experiments in \cite{Martin2015} showed that the block randQB algorithm is nearly 10X faster than it. 
        
In the first experiment we test the algorithms on dense matrices of varying size. $n$ ranges from 2,000 to 40,000. The value of $l$ is always 200. The results are shown in Figure 3 for the situations without and with the power scheme. The data of the blocked randQB algorithm for the matrix with $n=40,000$ are not available due to unreasonably long runtime of the program from \cite{RandQR_code}. %\footnote{In \cite{Martin2015}, the largest matrix in the experiment of blocked randQB algorithm is just of $16,000\times 16,000$. }
From the results in Figure 3(a), we see that the \texttt{randQB\_EI} and \texttt{randQB\_FP} algorithms are 2.4X (13.47s vs. 31.78s) and 4.0X (8.01s vs. 31.78s) faster than the implementation of {\randQBb} 
\begin{figure*}[ht]
        \centering
\subfigure[Without the power scheme] {\includegraphics[width=2.54in]{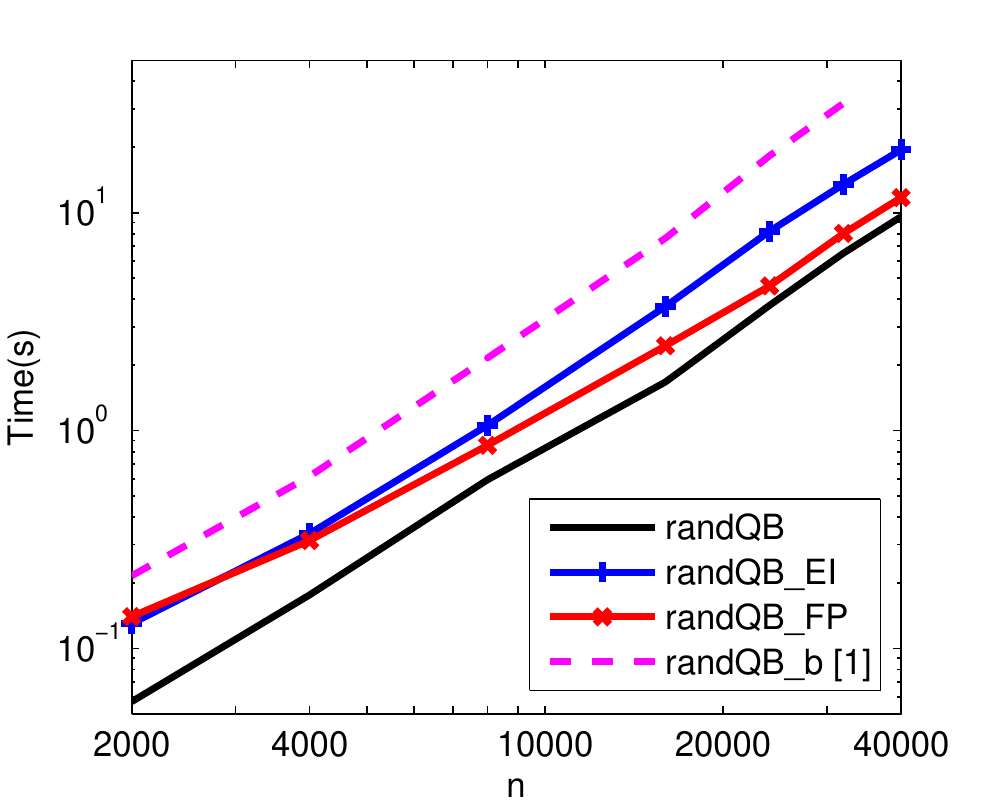}}
\subfigure[With the power scheme] {\includegraphics[width=2.54in]{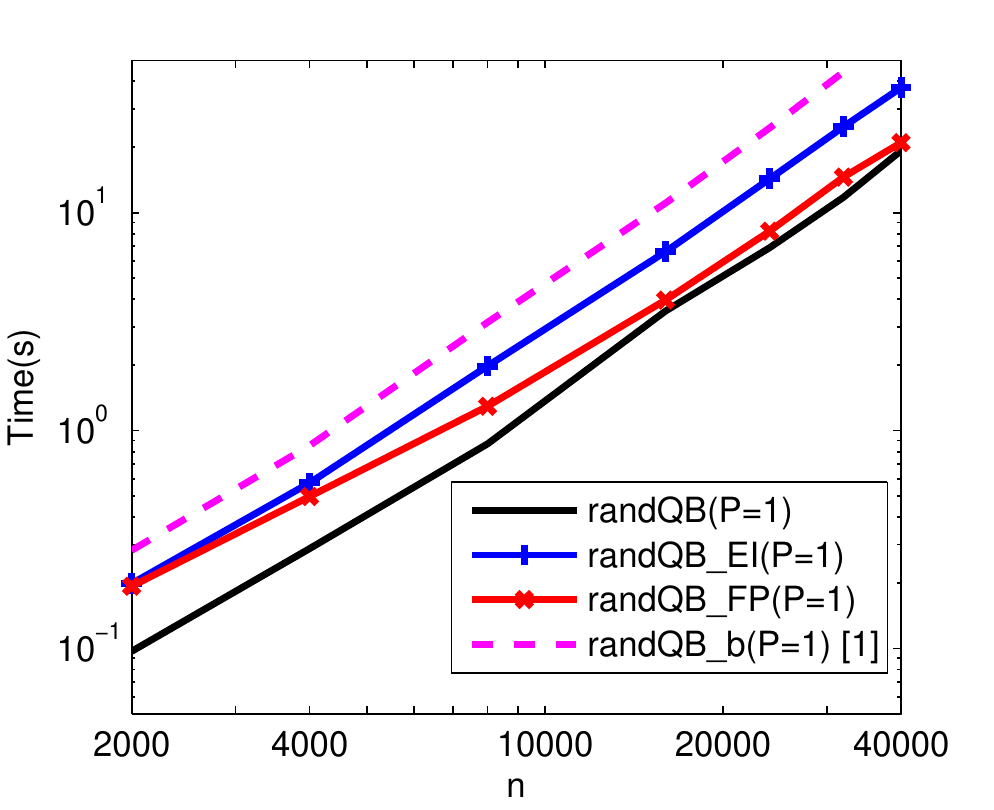}}
        \caption{Runtime of the randomized QB-factorization algorithms for dense matrices ($l=200$).}
        \label {test1p1}
        \end{figure*}    
algorithm respectively, when $n=32,000$. If the power scheme is imposed, the acceleration ratios decrease to 1.8X and 3.0X respectively, which are still remarkable. The \texttt{randQB} algorithm has the fastest computational speed, but its advantage over \texttt{randQB\_FP} algorithm becomes marginal when the matrix size is large.  
        
Here we only show the runtime results with the power parameter $P=1$, as for many applications this already achieves sufficient accuracy.
        
        The memory costs for some large matrices are listed in Table 2. For the \texttt{randQB}, \texttt{randQB\_EI} and \texttt{randQB\_FP} algorithms, the memory cost is mainly due to storing matrix $\bA$. For the \texttt{randQB\_b} algorithm, it needs additional memory to store the residual matrix and the product of $\boldsymbol{QB}$. So, the proposed algorithms consume about 1/3 of that used by the blocked randQB algorithm. If we allow that $\bA$ can be overwritten by the residual matrix, the memory cost of the  \texttt{randQB\_b} algorithm \cite{Martin2015} can be reduced, but still 2X larger than the proposed algorithms. 
\begin{table}[h]
  \caption{The memory usage of the randomized QB-factorization algorithms for dense matrices ($l=200$)}
  \label{tab:table1}
  \centering
\small{
\begin{spacing}{1}
  \begin{tabular}{*{5}{c}} \toprule
  $n$ & \texttt{randQB} & \texttt{randQB\_EI} &  \texttt{randQB\_FP} &  \texttt{randQB\_b}~[1] \\ \midrule 
% 2000 & 101 MB &  65 MB &  72 MB &  132 MB \\  
%4000 & 221 MB &  167 MB &  188 MB &  429 MB \\ \hline 
% 8000 & 749 MB &  570 MB &  607 MB &  1.5 GB \\ 
 16,000 & 2,308 MB &  2,303 MB &  2,357 MB &  6,237 MB \\  
 24,000 & 4,792 MB &  4,796 MB &  4,873 MB &  13,618 MB \\  
 32,000 & 8,253 MB &  8,253 MB &  8,356 MB &  23,931 MB \\  
 40,000 & 12,694 MB &  12,581 MB &  12,714 MB &  N.A. \\ %\midrule 
%  $n$ & \texttt{randQB}($P$=1) & \texttt{randQB\_EI}($P$=1) &  \texttt{randQB\_FP}($P$=1) &  \texttt{randQB\_b}($P$=1) [1] \\ \midrule 
% 16000 & 2.1 GB &  2.0 GB &  2.1 GB & 3.9 GB \\ 
% 24000 & 4.5 GB &  4.5 GB &  4.6 GB &  8.4 GB \\ 
% 32000 & 7.9 GB &  7.8 GB &  7.9 GB &  15 GB \\  
% 40000 & 12 GB &  12 GB &  12 GB &  -- 
\bottomrule 
 \end{tabular}
 \end{spacing}
 }
\end{table}
For the largest case with $n=40,000$, the \texttt{randQB\_b} algorithm actually requests more memory than the size of RAM ($\sim$ 32 GB), which explains the aforementioned long runtime of \texttt{randQB\_b}. 
%\begin{table}[tbhp] %t-top, b-bottom, h-here, p-page of float figure/table/algorithms

The second experiment is about the algorithms' efficiency for sparse matrices. We generate sparse matrices with roughly 0.3\% non-zero elements. They are stored in CSR (compressed sparse row) format \cite{SaadBook}. 
The runtimes of the algorithms are shown in Figure 4. The results of the \texttt{randQB\_b} algorithm for the matrices with $n\ge40,000$ are not available due to unreasonably long runtime. In contrast, it only takes a couple of seconds for the other algorithms to process the largest matrix with $n=48,000$. We see that the proposed algorithms take usage of the sparsity, while the blocked randQB algorithm cannot. The speedup ratios of the former to the latter increase as the matrix size increases. For $n=32,000$, the \texttt{randQB\_EI} and   \texttt{randQB\_FP} algorithms are more than 22X and 14X faster than the implementation of 
\begin{figure*}[h]         \centering
\subfigure[Without the power scheme] {\includegraphics[width=2.54in]{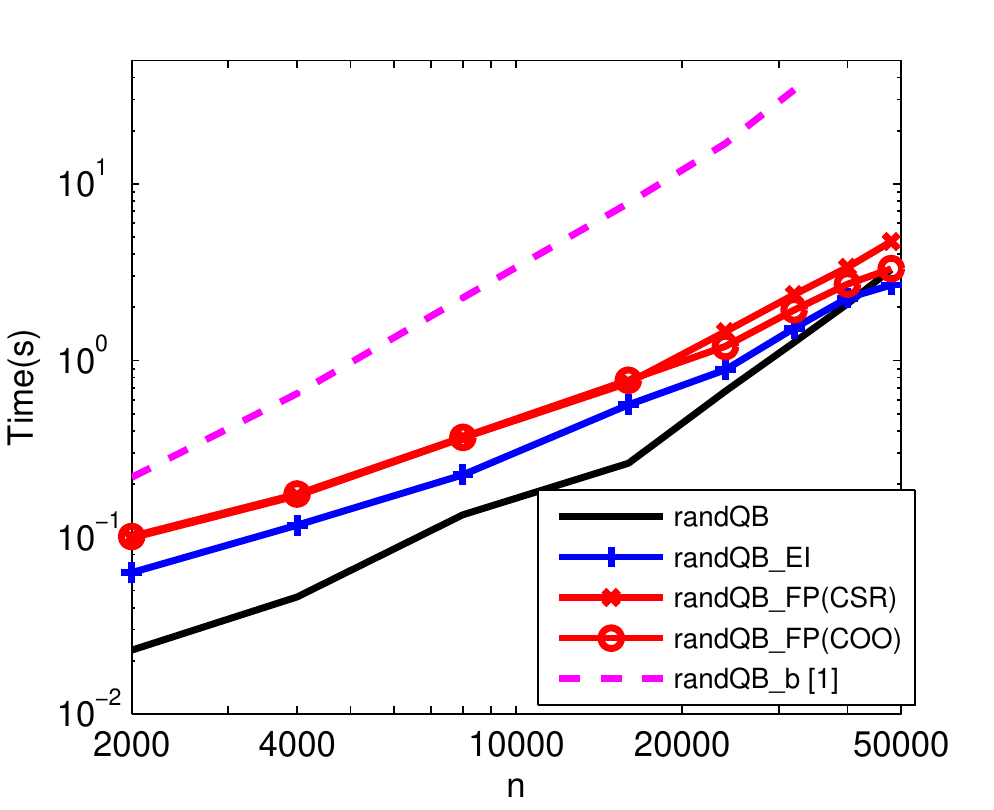}}
\subfigure[With the power scheme] {\includegraphics[width=2.54in]{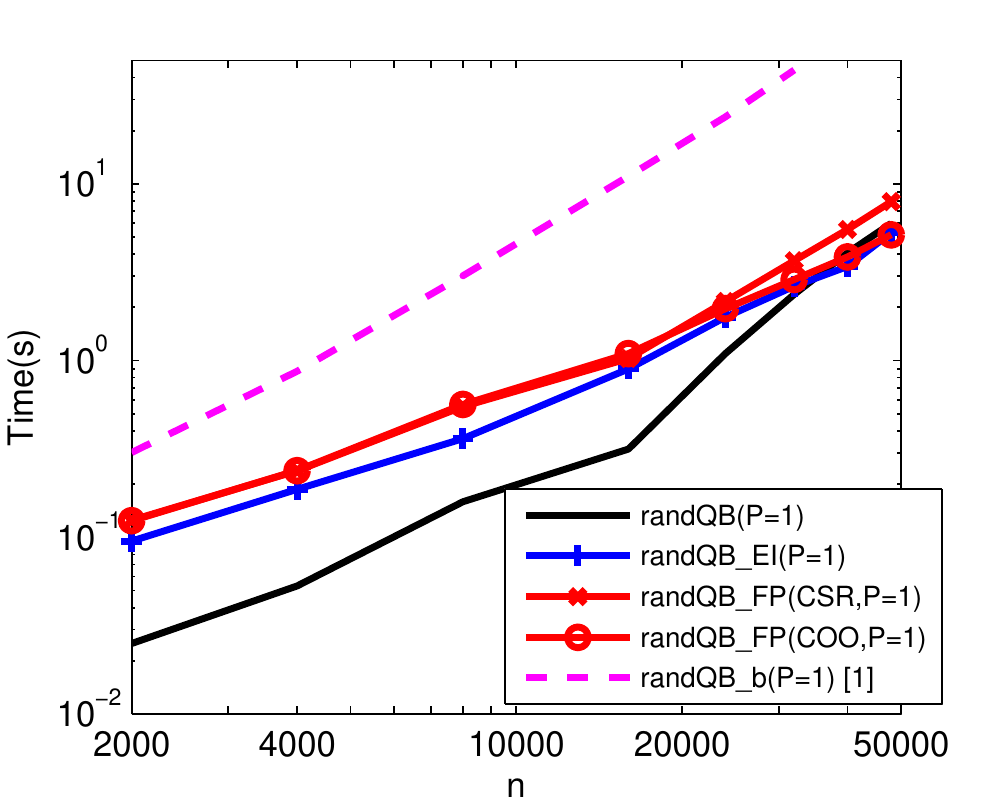}}
        \caption{Runtime of the randomized QB-factorization algorithms for sparse matrices ($l = 200$).}
        \label {test2}
        \end{figure*}
\texttt{randQB\_b} algorithm, respectively. Different from the situation for dense matrices, the \texttt{randQB\_EI} algorithm becomes faster than \texttt{randQB\_FP}. This implies that lumping the multiplications of a sparse matrix all together brings less benefit. 
And, \texttt{randQB\_EI} could run faster than the \texttt{randQB} algorithm for matrix size over 48,000. 
This can be explained by the comparison of (10) and (12), the inefficiency of orthogonalizing the whole matrix of $l$ columns, and that the sparse matrix removes the benefit of BLAS-3 operation to \texttt{randQB}. 
Another interesting phenomenon is that if we instead store a large sparse matrix with the COO (coordinate) format, the runtime of \texttt{randQB\_FP} algorithm can be reduced by 30\%. This means that the COO format is more adaptive to parallel computing. If we set $P=1$, similar observations regarding the experimental results can be drawn, as shown in Figure 4(b).

The memory cost of these algorithms are 
% Besides, the \texttt{randQB\_FP} with COO format of sparse matrix consumes a couple of percents more memory, which is not listed in the table. 
%\begin{table}[tbhp] %t-top, b-bottom, h-here, p-page of float figure/table/algorithms
 listed in Table 3, from which we see more prominent memory saving of the proposed algorithms over the \texttt{randQB\_b} algorithm. While compared with \texttt{randQB}, the proposed algorithms consume comparable memory.

        Lastly, we test a dense matrix with size $n=8,000$, and vary the value of $l$. The trends of the runtime are plotted in Figure 5. It shows that the \texttt{randQB\_EI} and \texttt{randQB\_FP} algorithms without the power scheme
 are about 1.9X and 2.5X faster than \texttt{randQB\_b}, respectively. If the power scheme is imposed, the speedup ratios to \texttt{randQB\_b} decrease, but \texttt{randQB\_FP} is still more than 2X faster than \texttt{randQB\_b}.
\begin{table}[tbh]
  \caption{The memory usage of the randomized QB-factorization algorithms for sparse matrices ($l=200$)}
  \label{tab:table2}
  \centering
\small{
\begin{spacing}{1}
  \begin{tabular}{*{5}{c}} \toprule
  $n$ & \texttt{randQB} & \texttt{randQB\_EI} &  \texttt{randQB\_FP} &  \texttt{randQB\_b} [1] \\ \midrule 
% 8000 & 213 MB &  81 MB &  128 MB &  1.5 GB \\  
 16,000 & 162 MB &  174 MB &  223 MB &  6,153 MB \\ 
 24,000 & 232 MB &  239 MB &  312 MB &  13,572 MB \\  
 32,000 & 293 MB &  303 MB &  402 MB &  23,917 MB \\  
 40,000 & 338 MB &  359 MB &  488 MB &  N.A. \\ 
  48,000 & 405 MB & 426 MB &  582 MB &  N.A. \\ %\midrule 
%  $n$ & \texttt{randQB}($P$=1) & \texttt{randQB\_EI}($P$=1) &  \texttt{randQB\_FP}($P$=1) &  \texttt{randQB\_b}($P$=1) [1] \\ \midrule 
% 16000 & 220 MB &  153 MB &  316 MB & 3.9 GB \\  
% 24000 & 228 MB &  220 MB &  445 MB &  8.4 GB \\  
% 32000 & 282 MB &  322 MB &  501 MB &  15 GB \\  
% 40000 & 305 MB &  411 MB &  590 MB &  -- \\ 
%   48000 & 389 MB &  478 MB & 664 MB &  -- \\ 
\bottomrule 
 \end{tabular}
 \end{spacing}
 }
\end{table}
\begin{figure*}[th]
        \centering
\subfigure[Without the power scheme] {\includegraphics[height=2.06in]{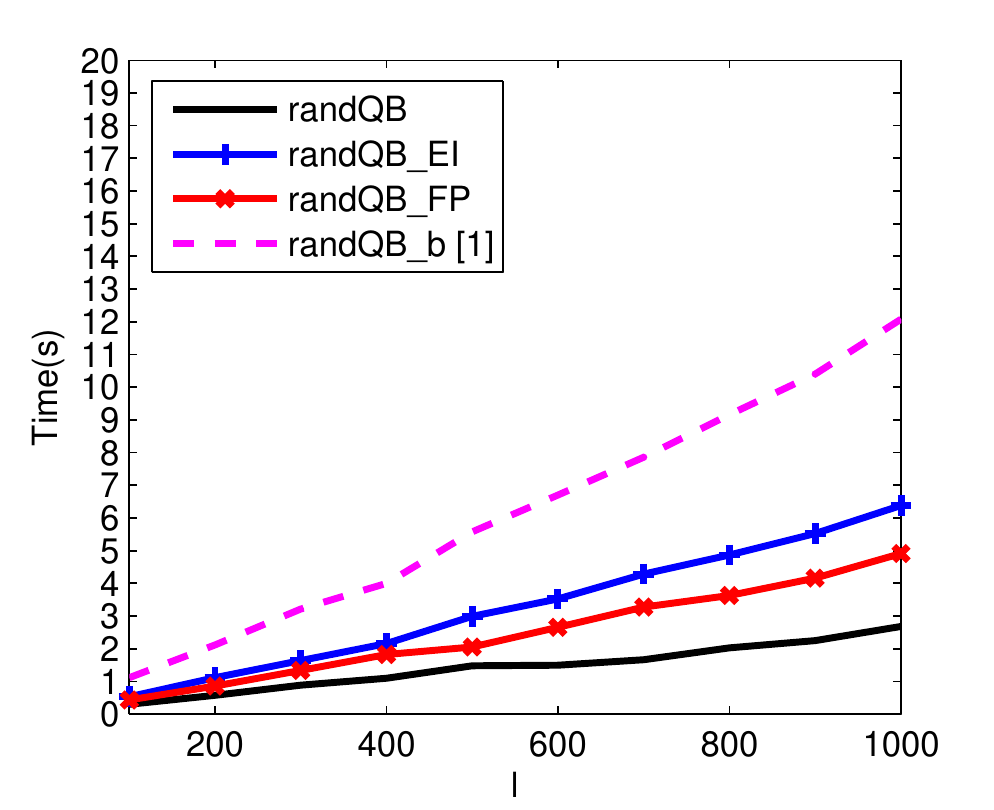}}
\hspace{-6pt}
\subfigure[With the power scheme] {\includegraphics[height=2.06in]{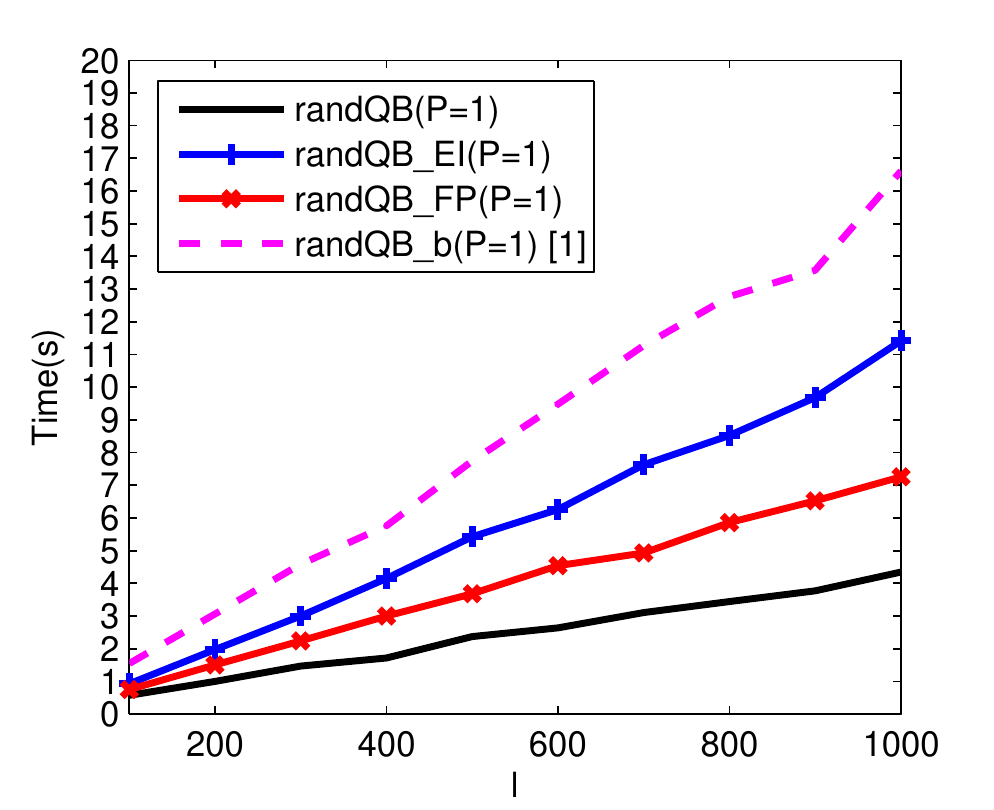}}
        \caption{Runtime of the randomized QB-factorization algorithms for a dense matrix ($n=8,000$) with varying value of $l$.}
        \label {test3eps}
        \end{figure*}

    % Test the 5 algorithms on two types of matrices, whose sizes vary, and give plot of timing result
        % execution time of 1 - 4 (with p=0,1,2), where fixed rank k = 200
            % versus
                % size of matrices: 1, 2, 4, 8, 16, 32 (k)
    % in both dense and sparse cases.
    % (k=200, dense), (k=200, sparse)
    % 2 plots in total.
    
    % Test each algorithm with different k but fix n
        % timings of 1-4 with p=0,1,2 where n=8000
            % versus
                % k=20,40,...800
    % in both dense and sparse case
    % (n=8000, dense), (n=8000, sparse)
    % 2 plots in total
    
    % raw data: 4 columns are: basicQB / martin / algo 1 / algo 3.

    \subsection{Comparison of accuracy}
    Three kinds of matrices are tested standing for different distribution patterns of singular values:
    \begin{itemize}
        \item \textbf{Matrix 1 (slow decay)}: $\bA = \boldsymbol{U\Sigma V}$, where $\boldsymbol{U}$ and  $\boldsymbol{V}$ are randomly drawn matrices with orthonormal columns, and the diagonal matrix $\boldsymbol{\Sigma}$ has diagonal elements $\sigma_{j} = 1/j^2$.
        \item \textbf{Matrix 2 (fast decay)}: $\bA$ is formed just like Matrix 1, but the diagonal elements of $\boldsymbol{\Sigma}$ is given by $\sigma_{j} = e^{-j/7}$. It reflects a fast decay of singular values.
        \item \textbf{Matrix 3 (S-shape decay)}: $\bA$ is built in the same manner as Matrix 1 and Matrix 2, but the diagonal elements of $\boldsymbol{\Sigma}$ are given by $\sigma_{j} = 0.0001+ (1+e^{j-30})^{-1}$. It makes the singular values first hover around 1, then decay rapidly, and finally level out at about 0.0001.
    \end{itemize}    
For each kind, we generate a $2,000\times 2,000$ matrix, for which we compare the errors of the proposed techniques and the blocked randQB scheme \cite{Martin2015} for varying $l$ values. The results are shown in Figure 6, where we see that the     \begin{figure*}[h]
\centering
\subfigure[Errors of Matrix 1 whose singular values decay slowly] {\includegraphics[width=3.1in]{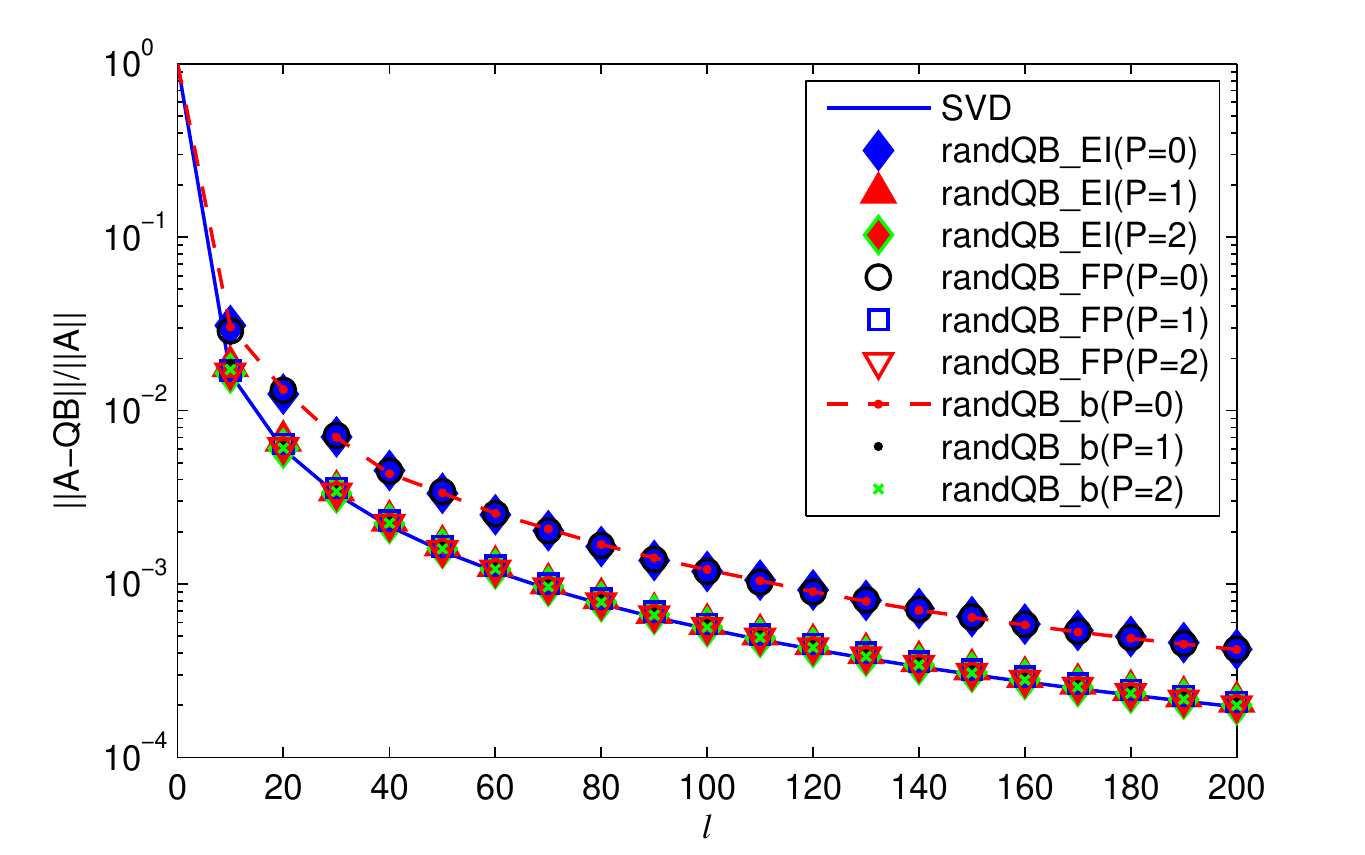}}
\par\vspace{-8pt}
\subfigure[Errors of Matrix 2 whose singular values decay rapidly] {\includegraphics[width=3.1in]{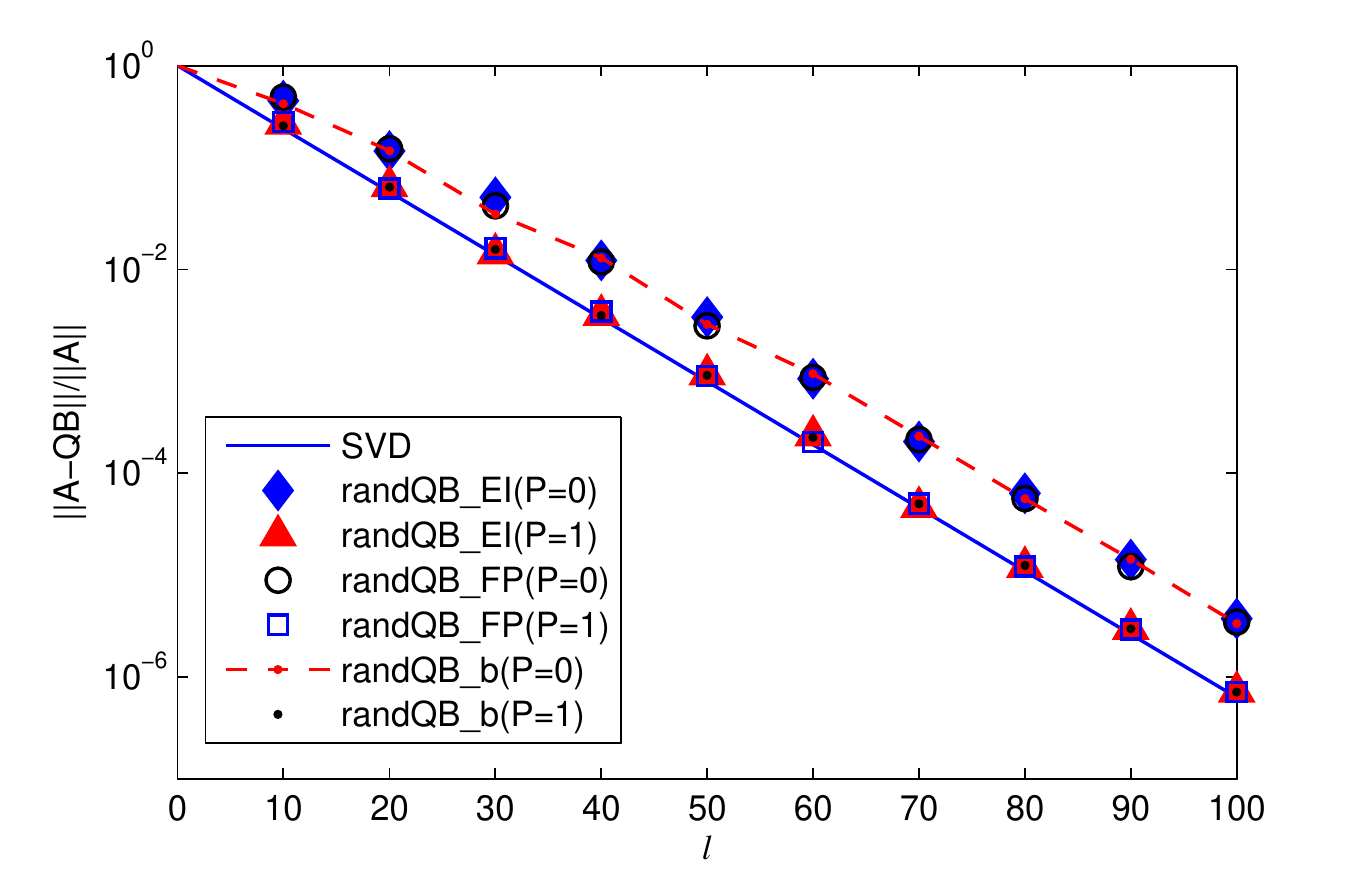}}
\par\vspace{-8pt}
\subfigure[Errors of Matrix 3 with S-shape decay of singular values] {\includegraphics[width=3.1in]{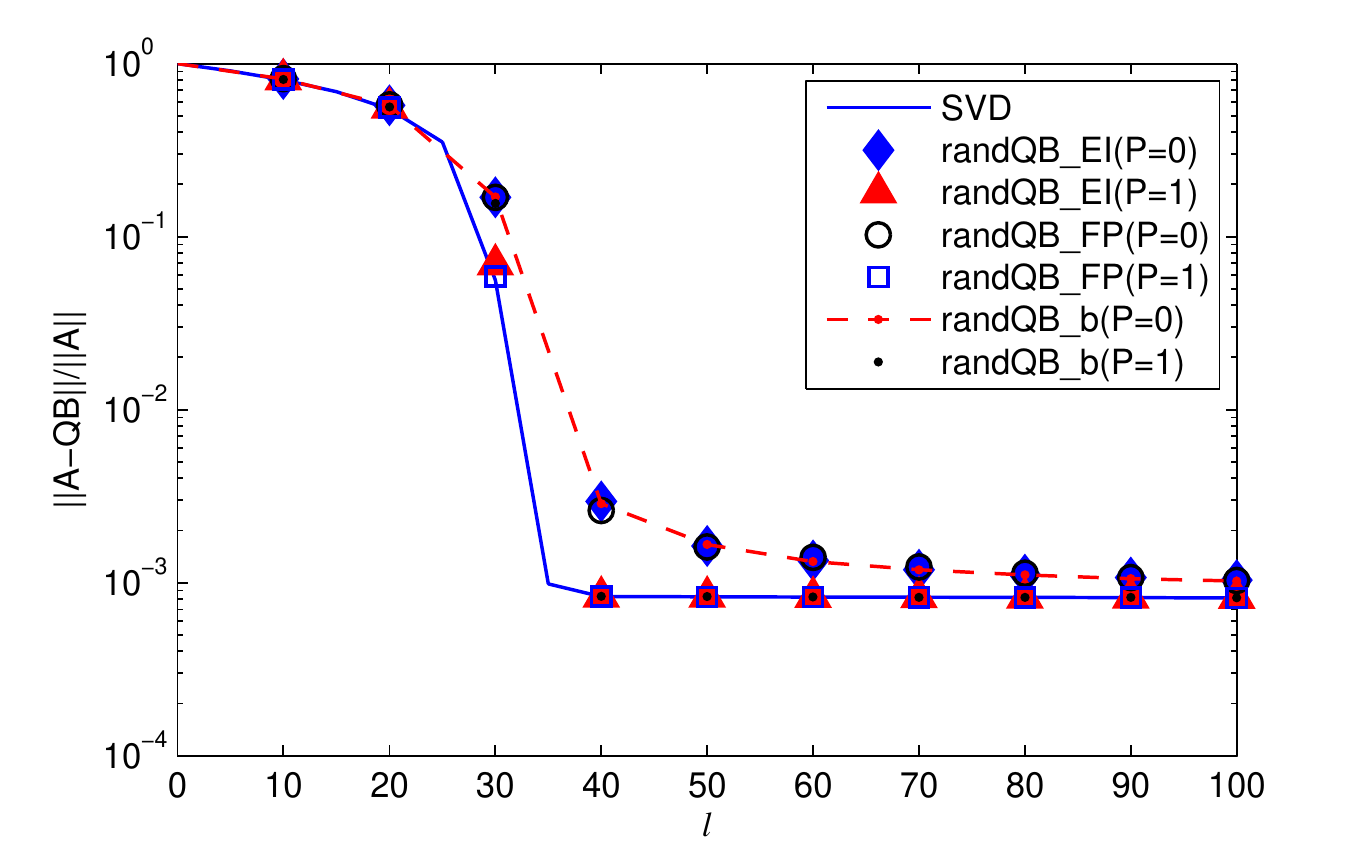}}
\caption{Errors of different algorithms for approximating the test matrices ($n=2,000$, 
$b=10$).}
\label {acc_figures}
\end{figure*}
proposed techniques have just the same accuracy as the blocked randQB algorithm. 
%    \begin{figure*}[h]
%    \centering
%    \includegraphics[height=2in]{accplot4v3.eps}
%    \caption{The relative errors of the QB approximation and the corresponding estimations obtained from the error indicator in the \texttt{randQB\_EI} and \texttt{randQB\_FP} Algorithms for Matrix 2. %The error indicator fails to work if the rank $l$ increases causing the relative error smaller than $5\times 10^{-7}$.
%    }
%    \label {indicator}
%    \end{figure*}
%They are  mathematically equivalent after all.
If we use the power scheme, even with a power parameter 
as small as $P=1$, the errors of the \texttt{randQB\_EI} and \texttt{randQB\_FP} algorithms are remarkably reduced. 
And, the power schemes with $P=1$ and $P=2$ produce indistinguishable results for the tested matrices. Both are extremely close to the optimal results from SVD.
%In the plots for Matrix 2 and 3 we do not show the results of the power scheme with $P=2$. 
%The data shown in Figure 6 refer to a single instantiation of the randomized algorithm; but they have already shown sufficient accuracy.

%    The validity of the error indicator proposed in 
% Section 3 is critical for solving the fixed-precision problem. To evaluate its accuracy, we have tested Matrix 2, and drawn the relative errors of approximation and the corresponding results derived from the error indicator for different $l$ values in Figure 6. From the figure we see that, if the actual error is no less than $5\times 10^{-7}$, the results from the error indicator in Algorithm 1 and Algorithm 4 are very accurate. When it becomes smaller (for larger $l$), the results from the error indicator stagnate, failing to reflect the actual error. This result verifies the analysis given at the end of Section 3.    
    
   \subsection{Performance of the single-pass algorithm}
    Without the power scheme, the \texttt{randQB\_FP} algorithm is a single-pass algorithm (see Algorithm 3). This is because $\boldsymbol{G}=\bA\boldsymbol{\Omega}$ and $\boldsymbol{H}=\bA^T \boldsymbol{G}$ can be executed through one pass over matrix $\bA$, providing that $\bA$ is in the row-major format. 
 Another single-pass algorithm was proposed in \cite{Halko2011}, as a remedy to the \texttt{randQB} algorithm. It is shown in Figure 7, whose step (3) produces matrices $\bQ$ and $\tilde{\bQ}$ such that $\bA \approx \bQ \bQ^T\bA \tilde{\bQ} \tilde{\bQ}^T$. Then, a small matrix $\hat{\bB}= \bQ^T\bA \tilde{\bQ}$ is approximately solved in step (4), because $\tilde{\bQ}^T \tilde{\boldsymbol{Y}} \approx \tilde{\bQ}^T \bA^T \bQ \bQ^T \tilde{\boldsymbol{\Omega}}= \hat{\bB}^T \bQ^T \tilde{\boldsymbol{\Omega}}$. This single-pass algorithm corresponds to the low-rank factorization in form of $\bA \approx \bQ \hat{\bB} \tilde{\bQ}^T$. Obviously, it includes more approximations and is not equivalent to the {\randQB} algorithm. In contrast, Algorithm 3 is mathematically equivalent to {\randQB} (see Proposition 2), and is supposed to be more accurate. With Matrix 1 and Matrix 2 from Sect. 5.2, we can compare the accuracy of the both algorithms, whose results are plotted in Figure 8.    
\begin{figure*}[hbt]
\centering
\includegraphics[height=1.2in]{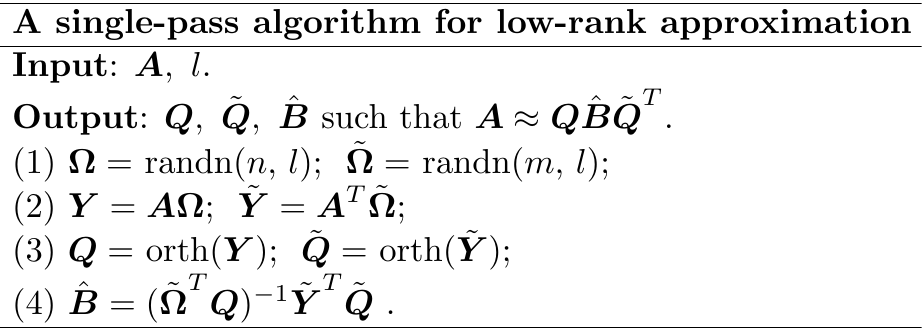}
\caption{An existing single-pass algorithm for low-rank matrix factorization \cite{Halko2011}.}
\label {fig8}
 \end{figure*}

\begin{figure*}[hbt]
\centering
\includegraphics[height=2.3in]{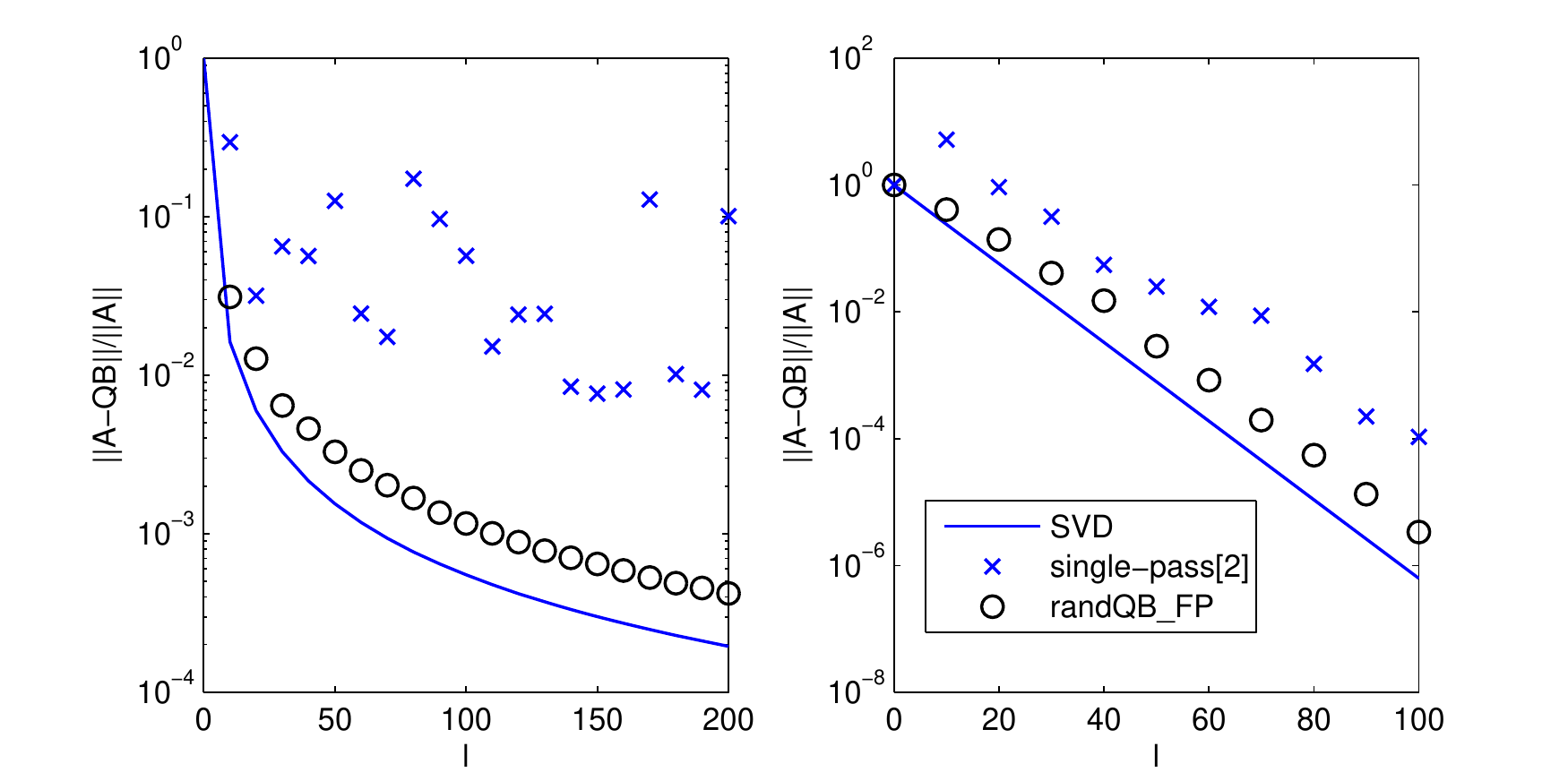}
\caption{The approximation errors of the two single-pass algorithms and truncated SVD for Matrix 1 (left) and Matrix 2 (right).}
\label {fig9}
 \end{figure*}

From Figure 8 we see that the approximation error of the single-pass algorithm in \cite{Halko2011} is often one order of magnitude larger than that of our \texttt{randQB\_FP} based algorithm. Actually, it does not even decrease as the rank of the approximation matrix increases. We also calculate the top 50 singular values, and the over-sampling with $s=10$ is applied to the both algorithms. The results are shown in Figure 9, along with those obtained from the \texttt{randQB} algorithm, where the results of \texttt{randQB\_FP} and \texttt{randQB} are indistinguishable. For the matrix with slow decay of singular value the result from \texttt{randQB\_FP} shows moderate accuracy on the top singular values, whose error is usually orders of magnitude smaller than that of the single-pass algorithm in \cite{Halko2011}.
\begin{figure*}[h]
\centering
\includegraphics[height=1.8in]{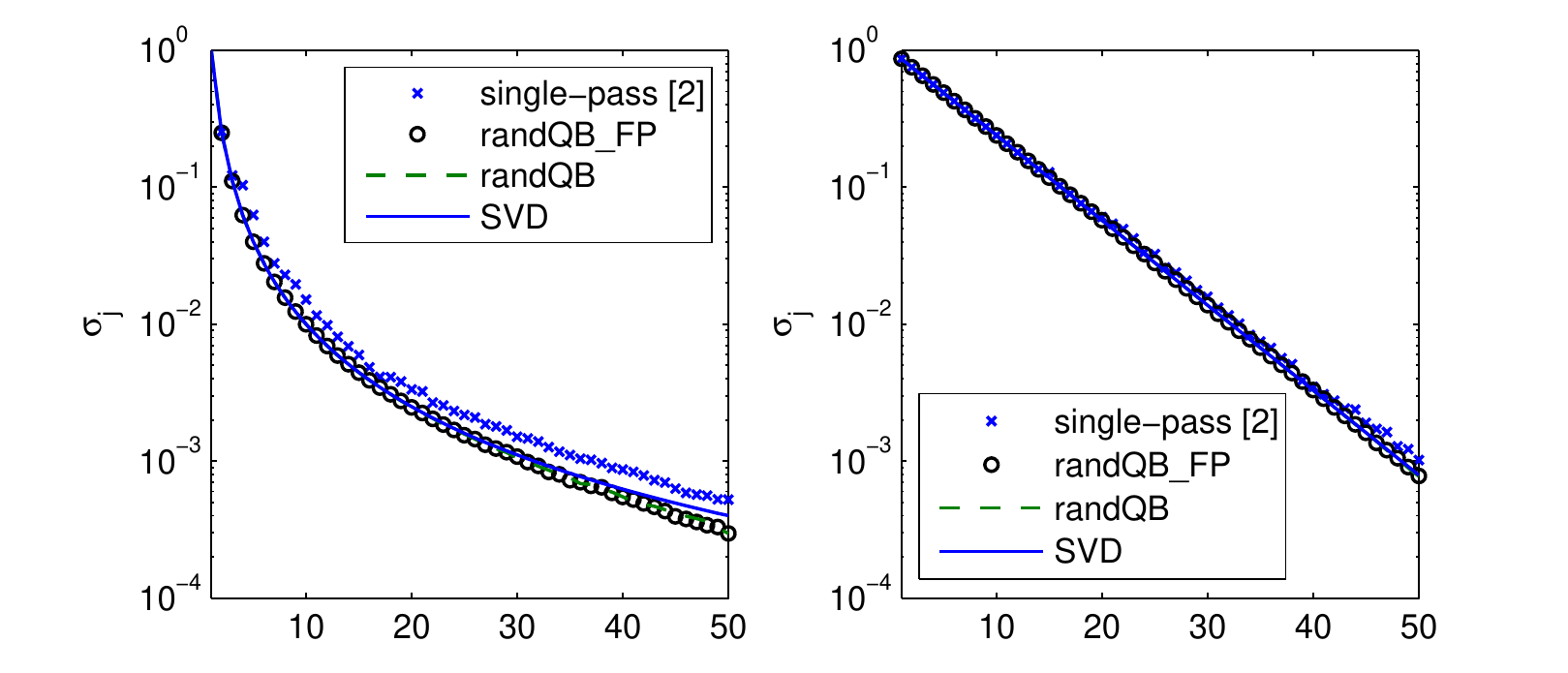}
\caption{The computational results of top singular values for Matrix 1 (left) and Matrix 2 (right).}
\label {fig10}
 \end{figure*}

    \subsection{Results of solving the fixed-precision problems}
    In this subsection we test the proposed algorithms with some fixed-precision problems. The optimal solution is the factorization with the smallest sizes of $\bQ$ and $\bB$, which corresponds to less amount of subsequent computation. It can be achieved by first calculating SVD of the input matrix $\bA$, and then checking $(\sum_{j=k+1}^{\min(m,n)} \sigma_j^2)^{1/2}$, where $\sigma_j$ is $\bA$'s $j$-th singular value, to determine the smallest $k$ satisfying the accuracy criterion. Here, we always consider the accuracy criterion with a relative tolerance: $\| \boldsymbol{A} - \boldsymbol{QB} \|_{\nF} <\epsilon \|\bA\|_{\nF}$.
    
The proposed algorithms are compared with the SVD based method and the \emph{adaptive randomized range finder} (Algorithm 4.2 in [2]). The row-by-row calculation scheme mentioned in Section 3 is implemented into our algorithms. The experiments are carried out with Matlab on the aforementioned Linux server. The built-in commands like ``\texttt{svd}'', ``\texttt{qr}'', etc. are employed, which naturally take advantage of parallel computing.%\footnote{Seems the Matlab routines only take usage of one CPU, i.e., 12 cores.} 

For the \texttt{randQB\_FP} algorithm, we empirically set $\tilde{l}=50b$. 
%This limits the number of iterations and therefore avoids the reliability issue of the \texttt{randQB\_FP} algorithm. 
With a suitable value of $b$, this produces a large enough $\tilde{l}$ for attaining the accuracy criteria in the experiments. A more sophisticated  approach for setting $\tilde{l}$ and $b$ can be investigated in the future.  
    
   We first construct the three kinds of matrices in Sect. 5.2, each of $8,000 \times 8,000$ size, and test them with the four methods. Their results are shown in Table 4. For \texttt{randQB\_EI} and \texttt{randQB\_FP}, the power scheme with $P=1$ is used. The block size is set to $b=10$ in all tests, except the last one for which $b=40$. In Table 4, ``$\epsilon$'' stands for the threshold for relative error, and ``error'' means the relative Frobenius-norm error of the produced QB factorization. From the table we see that the results of \texttt{randQB\_EI} and \texttt{randQB\_FP} algorithms all satisfy the set accuracy demands. And, the corresponding ranks (i.e., the number of columns in $\bQ$) are very close to the the optimal values from the SVD 
based approach. As for the runtime, the proposed algorithms are usually several tens times faster than SVD. Notice that our Matlab programs are less optimized than the built-in \texttt{svd} command. So, more significant speedup could be expected for the implementation in C. 
\begin{table}[h]
  \caption{The results on solving the fixed-precision problems based on three test matrices}
  \label{tab:table3}
  \centering
\small{
\begin{spacing}{0.95}
\renewcommand{\multirowsetup}{\centering}
%  \begin{tabular}{|c c c c c c c c c c c|} 
  \begin{tabular}{*{11}{c}} 
  \toprule
Matrix 
 & \multirow{2}{*}{$\epsilon$} & \multicolumn{3}{c}{randQB\_EI} &  \multicolumn{2}{c}{randQB\_FP} &  \multicolumn{2}{c}{truncated SVD} &  \multicolumn{2}{c}{RangeFinder [2]} \\ \cmidrule(r){3-5} \cmidrule(r){6-7} \cmidrule(r){8-9} \cmidrule(r){10-11}
 type &  & rank & time(s) & error & rank & time(s) & rank & time(s) & rank & time(s) \\ \midrule
\multirow{2}{*}{Matrix 1} & 1e-2 & \textbf{15} & 1.19 & 9.3e-3 & \textbf{15} & 3.13 & \textbf{15} & \multirow{2}{*}{123} & 115 & 1.1 \\
 & 1e-4 &\textbf{327} & 8.29 & 9.98e-5 & \textbf{328} & 3.71 & \textbf{313} & & 2,084 & 29.1 \\ \midrule
\multirow{2}{*}{Matrix 2} & 1e-4 & \textbf{66} & 2.16 & 8.37e-5 & \textbf{66} & 2.73 & \textbf{65} & \multirow{2}{*}{115} & 101 & 1.1 \\
 & 1e-5 &\textbf{82} & 2.68 & 8.86e-6 & \textbf{82} & 3.17 & \textbf{81} & & 113 & 1.1 \\ \midrule
\multirow{2}{*}{Matrix 3} & 1e-2 & \textbf{33} & 1.56 & 4.1e-3 & \textbf{33} & 3.62 & \textbf{32} & \multirow{2}{*}{126} & 3,618 & 87.8 \\
 & 1.5e-3 &\textbf{1,588} & 18.7 & 1.499e-3 & \textbf{1,587} & 15.8 & \textbf{1,587} & & 7,916 & 379 \\ 
\bottomrule 
 \end{tabular}
 \end{spacing}
 }
\end{table}   
Although the adaptive range finder is built on a theory with spectral norm of matrix, in our experiments it always produces a QB factorization satisfying the accuracy demand in Frobenius norm. However, its results (factor matrices) are much larger than necessary.
 
   We then test the algorithms with two real data. One is from a scenic image \cite{image}, and the other is from an information retrieval application ``AMiner'' \cite{AMiner}. The colored image is represented by a %$3168\times 4752 \times 3$ array which is reorganized into a 
   $9,504\times 4,752$ matrix. The other is an $8,130\times 100,000$ keyword-person matrix produced with the term frequency and inverse document frequency (TF-IDF) model \cite{Matbook2007}. This sparse matrix has about 0.2\% nonzero elements. The computational results are listed in Table 5, with different power parameters and block sizes. They again validate that the proposed algorithms can automatically satisfy the accuracy criterion. And, with $P=2$ the result of rank is substantially reduced, approaching the optimal value. For the same power scheme, setting larger block size $b$ we can reduce the runtime of \texttt{randQB\_EI}. In contrast, the runtime of \texttt{randQB\_FP} increases with the block size, as we have set $\tilde{l}=50b$.
Notice that with the relative error tolerance $\epsilon=0.1$, the image is largely compressed ($\sim$ 7X size reduction), with little loss of quality (see Figure 10). 
And, the singular value of ``AMiner'' matrix decays very slowly,
but even with large approximation error its low-rank approximation could bring improved performance of information retrieval (c.f. \cite{Matbook2007}, Sect. 11.3). 
\begin{table}[tbhp]
  \caption{The results on solving the fixed-precision problems based on two real data}
  \label{tab:table4}
  \centering
\small{
\begin{spacing}{0.95}
\renewcommand{\multirowsetup}{\centering}
  \begin{tabular}{@{}c@{~}c@{~}c@{~}c@{}c@{}c@{~}c@{}c@{}c@{}c@{}c@{}c@{}} 
%  \begin{tabular}{*{12}{c}} 
  \toprule
\multirow{2}{*}{Matrix}
 & \multirow{2}{*}{$\epsilon$} & \multirow{2}{*}{parameters} & \multicolumn{3}{c}{randQB\_EI} &  \multicolumn{2}{c}{randQB\_FP} &  \multicolumn{2}{c}{truncated SVD} &  \multicolumn{2}{c}{RangeFinder [2]} \\ \cmidrule(r){4-6} \cmidrule(r){7-8} \cmidrule(r){9-10} \cmidrule(r){11-12}
  &  & & rank & time(s) & error & rank & time(s) & rank & time(s) & rank & time(s) \\ \midrule
\multirow{4}{*}{image} & \multirow{4}{*}{0.1} & $P$=1, $b$=10 & 468 & 8.1 & 0.0999 & 471 & 3.25 & \multirow{4}{*}{\textbf{426}} & \multirow{4}{*}{44.2} & \multirow{4}{*}{2,913} & \multirow{4}{*}{79.0} \\
 & & $P$=1, $b$=20 & 468 & 4.23 & 0.0999 & 472 & 4.44 &  &  &  &  \\ \cmidrule(r){3-8}
 & & $P$=2, $b$=10 & \textbf{441} & 9.98 & 0.0999 & \textbf{443} & 3.47 &  &  &  &  \\ 
 & & $P$=2, $b$=20 & 441 & 5.76 & 0.0999 & 443 & 7.26 &  &  &  &  \\ \midrule
\multirow{2}{*}{AMiner} &  \multirow{2}{*}{0.5} & $P$=1, $b$=50 & 2440 & 108 & 0.4999 & 2,449 & 143 & \multirow{2}{*}{\textbf{2,115}} & \multirow{2}{*}{1,049} & \multirow{2}{*}{8,018} & \multirow{2}{*}{399}  \\
 & & $P$=2, $b$=50 & \textbf{2,229} & 134 & 0.4999 & \textbf{2,242} & 205 &   &  &  &  \\ 
\bottomrule 
 \end{tabular}
 \end{spacing}
 }
\end{table} 
 \begin{figure*}[h]
\centering
\subfigure[original image] {\includegraphics[height=1.7in]{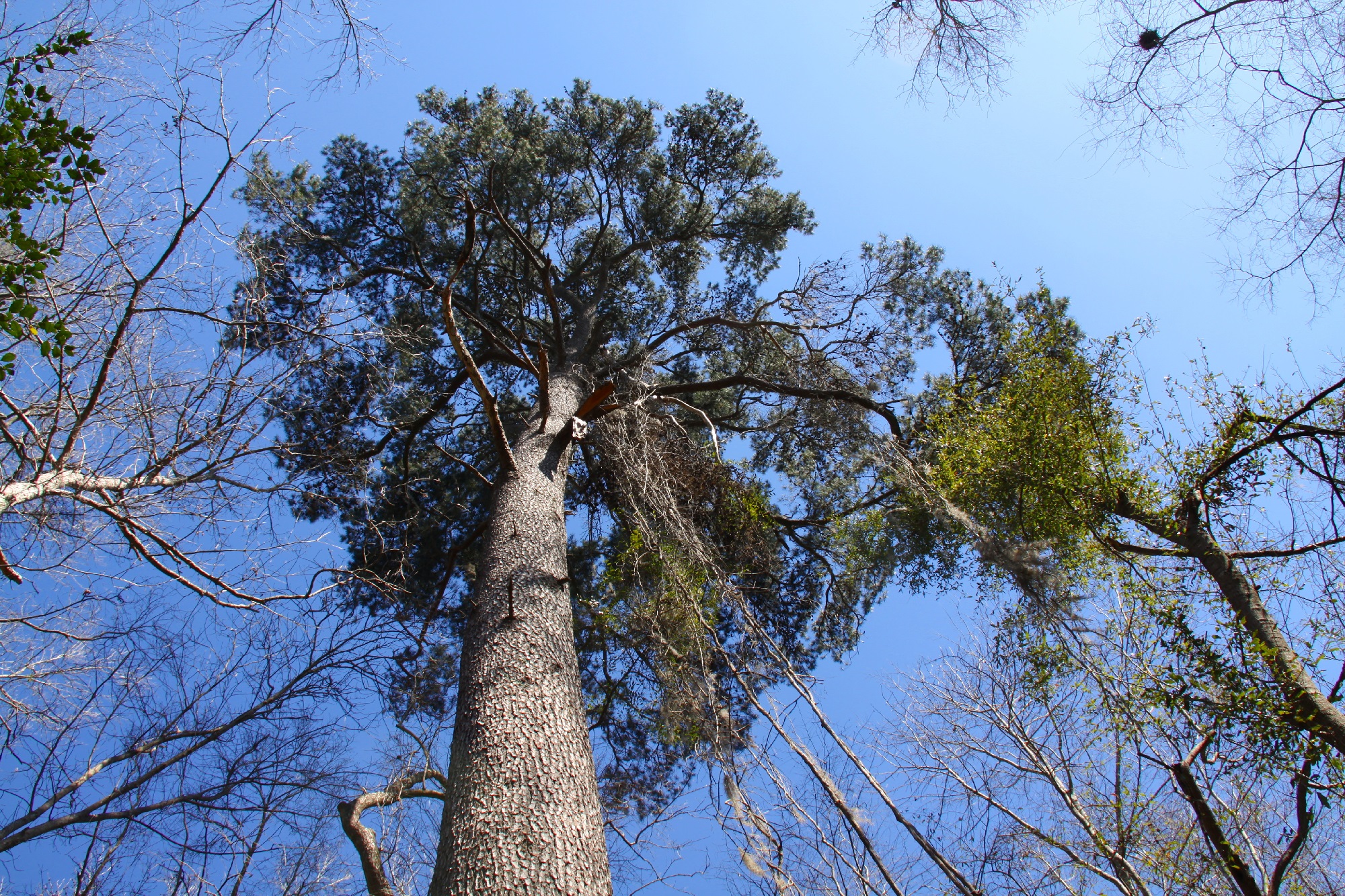}}
\hspace{-5pt}
\subfigure[compressed image (10\% error)] {\includegraphics[height=1.7in]{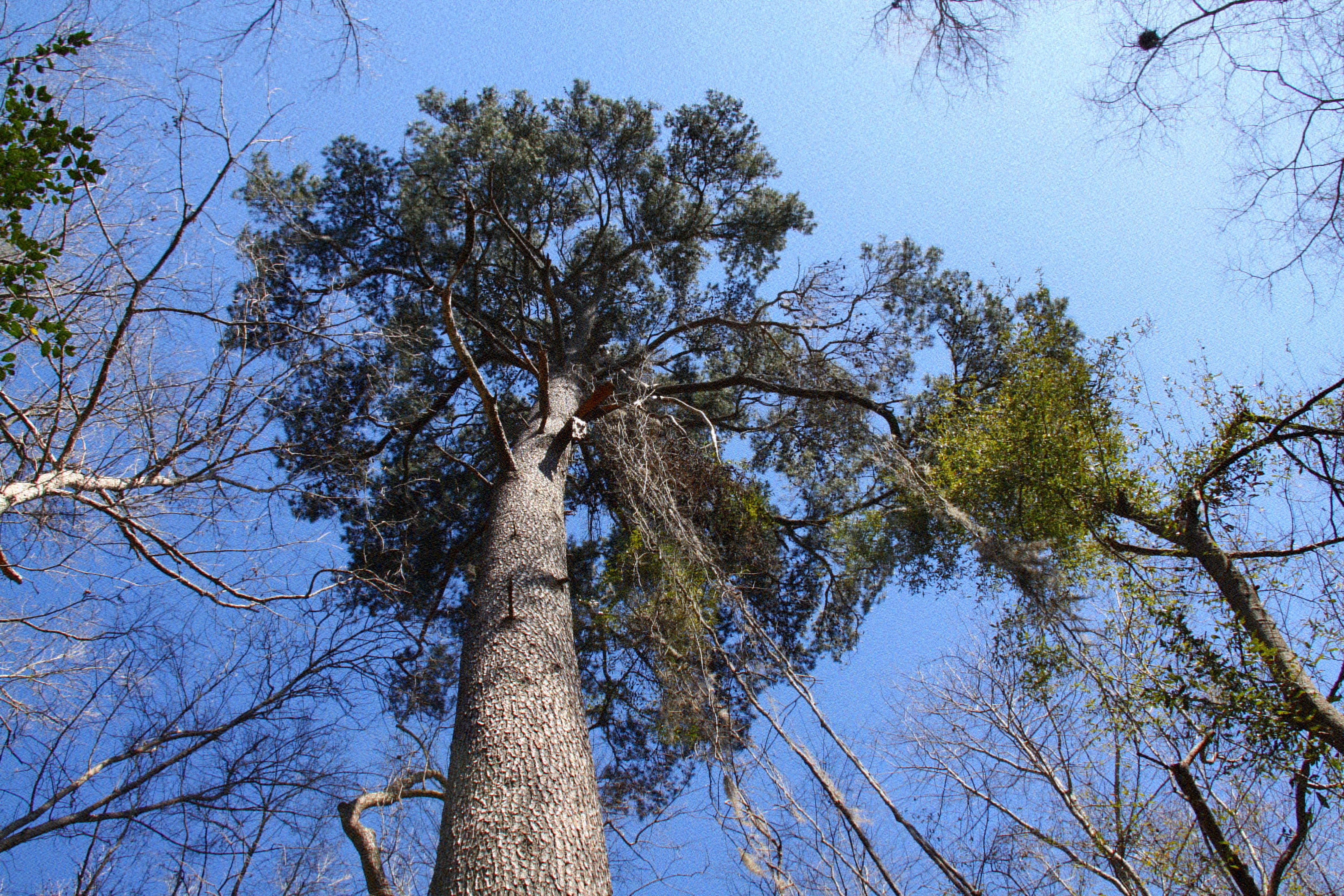}}
\caption{The original image and the compressed image obtained with the proposed algorithm.}
\label {image_comp}
\end{figure*}

   For the second data, which is a large sparse matrix, the Krylov subspace iterative method ``\texttt{svds}'' \cite{SaadBook, Matbook2007} is also tested. However, it costs 2,281 seconds for computing the first 1,000 singular values/vectors. It is much slower than executing ``\texttt{svd}'' to the matrix's dense version. Besides, ``\texttt{svd}'' requests more than 20 GB memory, while the proposed randomized algorithms only costs 3 GB memory or so for this case.

\section{Conclusions}
Efficient techniques are proposed for the fixed-precision low-rank approximation of large matrices. Our contributions are as follows.
\begin{itemize}%[leftmargin=*]
\item A simple and accurate error indicator in Frobenius norm is proposed, which enables efficient rank determination and can be used in the blocked randQB algorithm \cite{Martin2015} and other incremental QB-form factorization algorithms (like that in \cite{Duersch2015}). We have proved its accuracy and validity for the problems with relative accuracy tolerance larger than $2.1\times 10^{-7}$. Numerical experiments on large dense and sparse matrices have shown that the proposed rank determination scheme brings several to several tens times speedup and memory saving to the blocked randQB algorithm, without loss of accuracy. 
\item Base on the blocked randQB algorithm, we propose a pass-efficient algorithm called {\randQBfp}. It is mathematically equivalent to the blocked randQB algorithm, but reduces the passes over matrix $\bA$ to the fewest. The {\randQBfp} algorithm also suits to the fixed-precision problem, and can derive a single-pass algorithm under certain condition. Numerical results have validated the efficiency and accuracy of the {\randQBfp} algorithm, and shown that the derived single-pass algorithm is much more accurate than an existing counterpart.   
\item Real data are tested to demonstrate the effectiveness of the proposed algorithms for the fixed-precision problem. Compared with the adaptive range finder approach \cite{Halko2011}, the proposed algorithms run faster and produce much smaller factor matrices while attaining the accuracy criterion.  
\end{itemize}

Future work includes extending and applying the proposed algorithms to more practical data mining and machine learning scenarios.

%The proposed techniques are suitable for large and/or sparse matrices, especially for solving the fixed-precision problem. Although our presentation is mainly based on the randomized QB factorization \cite{Martin2015, Halko2011}, the proposed error estimation mechanism can be applied to the BLAS-3 QRCP algorithm \cite{Duersch2015}, which might attract more practical applications of low-rank approximation.
%Another potential application can be the tensor network SVD \cite{tensorSVD}, for which the proposed technique is ready to accelerate the $\delta$-truncated SVD computation.
%For the {\randQBfp} algorithm, its application to problems with streaming data and its enhancement with adaptive setting of parameters $\tilde{l}$ and $b$ will be explored in the future. 

%\section*{Acknowledgments}
%The authors thank Prof. P.-G. Martinsson for sharing the source code of the blocked randQB algorithm, which makes this work possible. 


\begin{thebibliography}{10} 
\bibitem{Martin2015}
{\sc P.-G. Martinsson and S. Voronin}, {\em A randomized blocked algorithm for efficiently computing
rank-revealing factorizations of matrices}, SIAM J.
  Sci. Comput., 38(2016), no. 5, pp. S485 - S507. %arXiv:1503.07157v2 [math.NA], 2015. 

\bibitem{Halko2011}
{\sc N. Halko, P.-G. Martinsson and J.~A. Tropp}, {\em Finding structure with randomness: Probabilistic
algorithms for constructing approximate matrix decompositions}, SIAM Review, 53 (2011), no. 2, pp.~217--288.

%\bibitem{Martin2006}
%{\sc P.-G. Martinsson, V. Rokhlin and M. Tygert}, {\em A randomized algorithm for the approximation
%of matrices}, Tech. Report Yale CS research report YALEU/DCS/RR-1361, Yale University, Computer Science
%Department, 2006.
%
%\bibitem{Martin2011}
%{\sc P.-G. Martinsson, V. Rokhlin and M. Tygert}, {\em A randomized algorithm for the decomposition of matrices}, Appl. Comput. Harmon. Anal., 30 (2011), no. 1, pp.~47--68.
%
%
\bibitem{cacm16}  % 
{\sc P. Drineas and M. W. Mahoney}, {\em {RandNLA}: Randomized numerical linear algebra}, Communications of the ACM, 59(2016), no. 6, pp.~80--90.

\bibitem{Voronin2015}
{\sc S. Voronin and P.-G. Martinsson}, {\em RSVDPACK: Subroutines for computing partial singular value
decompositions via randomized sampling on single core, multi core, and GPU architectures}, arXiv preprint, arXiv:1502.05366v3 [math.NA], 2016.

\bibitem{EB2011}
{\sc S. Eriksson-Bique, M. Solbrig, M. Stefanelli, S. Warkentin, R. Abbey, and I.~C.~F. Ipsen}, {\em Importance sampling for a Monte Carlo matrix multiplication algorithm, with application to information retrieval}, SIAM J. Sci. Comput., 33 (2011), no. 4, pp.~1689--1706. 

\bibitem{Drineas2006}
{\sc P. Drineas, R. Kannan, and M. W. Mahoney}, {\em Fast Monte Carlo algorithms for matrices II: Computing a low-rank approximation to a matrix,}, SIAM J. Sci. Comput., 36 (2006),  pp.~158--183. 

\bibitem{Rokhlin2009}  % power scheme was proposed.
{\sc V. Rokhlin, A. Szlam, and M. Tygert}, {\em A randomized algorithm for principal component analysis,},  SIAM J. Matrix Anal. Appl., 31 (2009), no. 3, pp.~1100--1124. 

\bibitem{YaohangLi}  % power scheme was proposed.
{\sc H. Ji and Y. Li}, {\em GPU accelerated randomized singular value decomposition and its application in image compression}, In Proceedings of Modeling, Simulation, and Visualization Capstone Conference, Sulfolk, VA, 2014, 7 pages.

\bibitem{sc15}  % GPU implementation of randomized low-rank appr.
{\sc T. Mary, I. Yamazaki, J. Kurzak, P. Luszczek, S. Tomov, and J. Dongarra}, {\em Performance of random sampling for computing low-rank approximations of a dense matrix on GPUs}, In Proc. SC'2015, Austin, TX, Nov. 2015, 11 pages.

\bibitem{Mahoney11}  % GPU implementation of randomized low-rank appr.
{\sc M. W. Mahoney}, {\em Randomized algorithms for matrices and data}, Foundations and Trends® in Machine Learning 3 (2011), no. 2, pp.~123--224.

%\bibitem{ZhangZH2016}  % pass-efficient is very important, for big-data application
%{\sc Q. Ye, L. Luo, and Z. Zhang}, {\em Frequent direction algorithms for approximate matrix multiplication with applications in CCA}, In Proceedings of the International Joint Conference on Artificial Intelligence (IJCAI'2016), 2016,  7 pages.


%\bibitem{Gu2015}
%{\sc M. Gu}, {\em Subspace iteration randomization and singular value problems}, SIAM J. Sci. Comput., 37 (2015), no. 3, pp.~A1139--A1173. 

\bibitem{MatrixBook}
{\sc G.~H. Golub and C.~F. Van~Loan}, {\em Matrix Computations}, 
  3rd ed., Johns Hopkins University Press, Baltimore, MD, 1996.

\bibitem{Gu1996}
{\sc M. Gu and S. C. Eisenstat}, {\em Efficient algorithms for computing a strong rank-revealing QR factorization}, SIAM J. Sci. Comput., 17 (1996), no. 4, pp.~848--869. 

\bibitem{Martin2015b} %{Duersch2015}
{\sc P.G. Martinsson, G. Quintana-Orti, N. Heavner, and R. van de Geijn.} {\em Householder QR factorization with randomization for column pivoting (HQRRP)}, SIAM J. Sci. Comput., 39 (2017), no. 2, pp.~C96--C115.

\bibitem{Duersch2015}
 {\sc J. A. Duersch, and M. Gu}, {\em Randomized {QR} with column pivoting}, SIAM J. Sci. Comput., 39 (2017), no. 4, pp.~C263--C291.
 
%\bibitem{sparseJL}  % 
%{\sc D. M. Kane and J. Nelson}, {\em Sparser Johnson-Lindenstrauss transforms}, J.  ACM, 61(2014), no. 1, 4, 23 pages.

\bibitem{E-Y1936}

{\sc C. Eckart and G. Young }, {\em The approximation of one matrix by another of lower rank}, Psychometrika, 1 (1936), pp.~211--218.
 

\bibitem{RandQR_code}  % power scheme was proposed.
{\sc S. Voronin and P.G. Martinsson},  {RandQR}, \url{http://amath.colorado.edu/faculty/martinss/main_codes.html}, 2015. 

\bibitem{ICC}  % power scheme was proposed.
{Intel Parallel Studio XE Cluster Edition for Linux}, \url{https://software.intel.com/en-us/intel-parallel-studio-xe}, 2017. 

\bibitem{SaadBook}
{\sc Y. Saad}, {\em Iterative Methods for Sparse Linear Systems}, SIAM Press, Philadelphia, MD, 2003.


\bibitem{image}  % power scheme was proposed.
\url{http://flickr.com/photos/38459790@N03/4398318416}. 

\bibitem{AMiner}  % power scheme was proposed.
\url{http://www.aminer.org}. 

\bibitem{Matbook2007}
{\sc L. Elden}, {\em Matrix Methods in Data Mining and Pattern Recognition}, SIAM Press, Philadelphia, MD, 2007.

\bibitem{Lehoucq1998}
{\sc Richard Lehoucq, Daniel Sorensen, and Chao Yang.}, {\em {ARPACK} User’s Guide: Solution
of Large-Scale Eigenvalue Problems with Implicitly Restarted Arnoldi Methods}, SIAM Press, Philadelphia, MD, 1998.

\bibitem{Tropp2017}
 {\sc Joel A. Tropp, Alp Yurtsever, Madeleine Udell, and Volkan Cevher}, {\em Practical sketching algorithms for low-rank matrix approximation}, SIAM J. Matrix Anal. Appl., 38 (2017), no. 4, pp.~1454--1485.

\bibitem{Tygert2017}
 {\sc H. Li, G. C. Linderman, A. Szlam, K. P. Stanton, Y. Kluger, and M. Tygert}, {\em Algorithm 971: An implementation of a randomized algorithm for principal component analysis}, ACM Trans Math Softw., 43 (2017), no. 3, art. no. 28.

%\bibitem{cond_Gaussian}
%{\sc Z. Chen and J. Dongarra}, {\em Condition numbers of Gaussian random matrices}, SIAM Journal on Matrix Analysis and Applications 27 (2005), no. 3, pp. ~603--620.

%\bibitem{tensorSVD}
% {\sc N. Lee, and A. Cichocki}, {\em Very large-scale singular value decomposition using tensor train networks}, arXiv preprint, arXiv:1410.6895, 2014.



\end{thebibliography}
\end{document}